\definecolor{LightGray}{gray}{0.8}
\def\PS{
\begin{tikzpicture}[scale = 1.7]
\draw[line width = 0.5] (-0.1, 0.0) -- (0.1,0.0);
\draw[line width = 0.5] (-0.1, 0.0) -- (0.0,0.1732);
\draw[line width = 0.5] ( 0.1, 0.0) -- (0.0,0.1732);
\end{tikzpicture}
}
\def\PSsmall{
\begin{tikzpicture}[scale = 1.3]
\draw[line width = 0.5] (-0.1, 0.0) -- (0.1,0.0);
\draw[line width = 0.5] (-0.1, 0.0) -- (0.0,0.1732);
\draw[line width = 0.5] ( 0.1, 0.0) -- (0.0,0.1732);
\end{tikzpicture}
}
\def\PSB{
\begin{tikzpicture}[scale = 1.7]
\draw[line width = 0.5] (-0.1, 0.0) -- (0.1,0.0);
\draw[line width = 0.5] (-0.1, 0.0) -- (0.0,0.1732);
\draw[line width = 0.5] ( 0.1, 0.0) -- (0.0,0.1732);

\draw[very thin]  (-0.05, 0.5*0.1732) -- (0.05, 0.5*0.1732);
\draw[very thin]  (-0.05, 0.5*0.1732) -- (0.0, 0.0);
\draw[very thin]  (0.0, 0.0) -- (0.05, 0.5*0.1732);

\draw[very thin]  (0.0, 0.0) -- (0.0,0.1732);
\draw[very thin] (-0.1, 0.0) -- (0.05, 0.5*0.1732);
\draw[very thin] (-0.05, 0.5*0.1732) -- (0.1,0.0);
\end{tikzpicture}
}
\def\PSD{
\begin{tikzpicture}[baseline={([yshift=-.5ex]current bounding box.center)}, scale = 1.7]
\draw[very thin]  (-0.1, 0.1732) -- (0.1, 0.1732);
\draw[very thin]  (-0.1, 0.1732) -- (0.0, 0.0);
\draw[very thin]  (0.0, 0.0) -- (0.1, 0.1732);

\draw[very thin]  (0.0, 0.0) -- (0.0, 0.1732);
\draw[very thin] (-0.05, 0.5*0.1732) -- (0.1, 0.1732);
\draw[very thin] (-0.1, 0.1732) -- (0.05, 0.5*0.1732);
\end{tikzpicture}
}
\def\PSE{
\begin{tikzpicture}[scale = 1.7]
\draw[line width = 0.5] (-0.1, 0.0) -- (0.1,0.0);
\draw[line width = 0.5] (-0.1, 0.0) -- (0.0,0.1732);
\draw[line width = 0.5] ( 0.1, 0.0) -- (0.0,0.1732);

\draw[very thin]  (0.0, 0.0) -- (0.0,0.1732);
\end{tikzpicture}
}
\def\PSF{
\begin{tikzpicture}[scale = 1.7]
\draw[line width = 0.5] (-0.1, 0.0) -- (0.1,0.0);
\draw[line width = 0.5] (-0.1, 0.0) -- (0.0,0.1732);
\draw[line width = 0.5] ( 0.1, 0.0) -- (0.0,0.1732);

\draw[very thin] (-0.1, 0.0) -- (0.05, 0.5*0.1732);
\end{tikzpicture}
}
\def\PSG{\begin{tikzpicture}[scale = 1.7]
\draw[line width = 0.5] (-0.1, 0.0) -- (0.1,0.0);
\draw[line width = 0.5] (-0.1, 0.0) -- (0.0,0.1732);
\draw[line width = 0.5] ( 0.1, 0.0) -- (0.0,0.1732);

\draw[very thin] (-0.05, 0.5*0.1732) -- (0.1,0.0);
\end{tikzpicture}}
\newcommand{\SimS}[1]{\raisebox{-0.75em}{\includegraphics[scale=0.27]{SimplexSplineSmall#1.pdf}}}
\newcommand{\SimSgeneric}{\raisebox{-0.75em}{\includegraphics[scale=0.27]{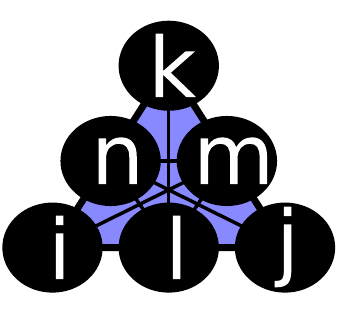}}}
\def\RR{\mathbb{R}}
\def\ZZ{\mathbb{Z}}
\def\BBB{\mathcal{B}}
\def\SSS{\mathcal{S}}
\def\TTT{\mathcal{T}}
\def\VVV{\mathcal{V}}
\def\EEE{\mathcal{E}}
\def\FFF{\mathcal{F}}
\def\bfc{\boldsymbol{c}}
\def\bff{\boldsymbol{f}}
\def\bfm{\boldsymbol{m}}
\def\bfp{\boldsymbol{p}}
\def\bfq{\boldsymbol{q}}
\def\bfu{{\boldsymbol{u}}}
\def\bfv{\boldsymbol{v}}
\def\bfx{\boldsymbol{x}}
\def\bfy{\boldsymbol{y}}
\def\bfone{\boldsymbol{1}}
\def\bfxi{ {\boldsymbol{\xi}} }
\def\bfS{\boldsymbol{S}}
\def\bfP{\mathbf{P}}
\def\bfK{\mathbf{K}}
\def\bfX{\mathbf{X}}
\def\bfY{\mathbf{Y}}
\def\mT{\text{T}}
\def\area{\text{area}}
\newtheorem{theorem}{Theorem}
\newtheorem{corollary}{Corollary}
\newtheorem{lemma}{Lemma}
\theoremstyle{definition}
\newtheorem{definition}{Definition}
\newtheorem{example}{Example}
\theoremstyle{remark}
\newtheorem{remark}[theorem]{Remark}
\title[]{Stable simplex spline bases\\ for $C^3$ quintics on the Powell-Sabin 12-split}
\author{Tom Lyche}
\address{University of Oslo, Department of Mathematics, P.O. Box 1053, Blindern, NO-0316, Oslo, Norway}
\email{tom@math.uio.no}
\author{Georg Muntingh}
\address{SINTEF ICT, Department of Applied Mathematics, P.O. Box 124 Blindern, NO-0314, Oslo, Norway}
\email{georg.muntingh@sintef.no}
\begin{document}
\begin{abstract}
For the space of $C^3$ quintics on the Powell-Sabin 12-split of a triangle, we determine explicitly the six symmetric simplex spline bases that reduce to a B-spline basis on each edge, have a positive partition of unity, a Marsden identity that splits into real linear factors, and an intuitive domain mesh. The bases are stable in the $L_\infty$ norm with a condition number independent of the geometry, have a well-conditioned Lagrange interpolant at the domain points, and a quasi-interpolant with local approximation order 6. We show an $h^2$ bound for the distance between the control points and the values of a spline at the corresponding domain points. For one of these bases we derive $C^0$, $C^1$, $C^2$ and $C^3$ conditions on the control points of two splines on adjacent macrotriangles.
\end{abstract}

\maketitle

\section{Introduction}
\noindent Piecewise polynomials or splines defined over triangulations form an indispensable tool in the sciences, with applications ranging from scattered data fitting to finding numerical solutions to partial differential equations. See \cite{Lai.Schumaker07,Ciarlet.78} for comprehensive monographs.

In applications like geometric modelling \cite{Cohen.Riensenfeld.Elber01} and solving PDEs by isogeometric methods \cite{Hughesbook} one often desires a low degree spline with $C^1$, $C^2$ or $C^3$ smoothness. For a general triangulation, it was shown in Theorem~1.(ii) of \cite{Zenisek.74} that the minimal degree of a triangular $C^r$ element is $4r + 1$, e.g., degrees $5,9,13$ for the classes $C^1$, $C^2$, $C^3$. To obtain smooth splines of lower degree one can split each triangle in the triangulation into several subtriangles. One such split is the Powell-Sabin 12-split $\PSB$ of a triangle $\PS$; see Figure \ref{fig:PS12}. On this split global $C^1$ smoothness can be obtained with degree only 2 \cite{Powell.Sabin77}, and $C^2$ smoothness with degree only $5$  \cite{Lai.Schumaker03,Lyche.Muntingh14, Schumaker.Sorokina06} on any (planar) triangulation.

Once a space is chosen one determines its dimension. The space $\SSS^1_2$ and $\SSS^3_5$ of $C^1$ quadratics and $C^3$ quintics on the 12-split of a single triangle have dimension 12 and 39, respectively. For a general triangulation $\TTT$ of a polygonal domain, we can replace each triangle in $\TTT$ by its 12-split to obtain a triangulation $\TTT_{12}$. The dimensions of the corresponding $C^1$ quadratic and $C^2$ quintic spaces (the latter with $C^3$ supersmoothness at the vertices of $\TTT$ and interior edges of the 12-split of each triangle in $\TTT$) are $3|\VVV| + |\EEE|$ and $10|\VVV| + 3|\EEE|$, where $|\VVV|$ and $|\EEE|$ are the number of vertices and edges in $\TTT$. Moreover, in addition to giving $C^1$ and $C^2$ spaces on any triangulation these spaces are suitable for multiresolution analysis \cite{Davydov.Yeo13, DynLyche98, Lyche.Muntingh14, Oswald92}.
 
For a general smoothness, degree, and triangulation, it is a hard problem to find a basis of the corresponding spline space with all the usual properties of the univariate B-spline basis. One reason is that it is difficult to find a single recipe for the various valences and topologies of the triangulations. In \cite{Cohen.Lyche.Riesenfeld13} a basis, called the (quadratic) S-basis, was constructed for the $C^1$ quadratics on the Powell-Sabin 12-split on one triangle. The S-basis consists of simplex splines \cite{Micchelli79, Prautsch.Boehm.Paluszny02} and has all the usual properties of univariate B-splines, including a recurrence relation down to piecewise linear polynomials and a Marsden identity. Moreover, analogous to the Bernstein-B\'ezier case, $C^0$ and $C^1$ smoothness conditions were given tying the S-bases on neighboring triangles together to give $C^1$ smoothness on the refined triangulation $\TTT_{12}$.

In the remainder of the next section we recall some background on splines and the 12-split. Section \ref{sec:DimensionFormula} introduces dimension formulas for spline spaces on the 12-split. In the next section some basic properties of simplex splines are recalled, after which an exhaustive list is derived of the $C^3$ quintic simplex splines on the 12-split that reduce to a B-spline on the boundary. Section \ref{sec:SimplexSplineBases} introduces a barycentric form of the Marsden identity and describes how the dual basis in \cite{Lyche.Muntingh14} can be applied to find the simplex spline bases of $\SSS^3_5$ satisfying a Marsden identity. Making use of the computer algebra system {\tt Sage} \cite{Sage}, these techniques are applied in Section~\ref{sec:Results} to discover six symmetric simplex spline bases that reduce to a B-spline basis on each boundary edge, have a positive partition of unity, a Marsden identity, and domain points with an intuitive control mesh and unisolvent for $\SSS^3_5$. The concise and coordinate independent form of the barycentric Marsden identity makes it possible to state these results in Table \ref{tab:DualPolynomials}. Analogous to the Bernstein-B\'ezier case, we find $C^0$, $C^1$, $C^2$ and even $C^3$ conditions for one of these bases, tying its splines together on a general triangulation. One of the latter smoothness conditions involves just the control points in a single triangle and the geometry of the neighboring triangles, showing that $C^3$ smoothness using $\SSS^3_5$ cannot be achieved on a general refined triangulation $\TTT_{12}$. Finally a conversion to the Hermite nodal basis from \cite{Lyche.Muntingh14} is provided.

\section{Background}
\subsection{Notation}
On a triangulation $\TTT$ of a polygonal domain $\Omega\subset\RR^2$ we define the spline spaces
\[ \SSS_d^r(\TTT):=\{f\in C^r(\Omega): f|_{\PSsmall} \in\Pi_d\ \forall \PS\in\TTT\}, \ r,d\in\ZZ,\  r\ge -1,\ d\ge 0, \]
where $\Pi_d$ is the space of bivariate polynomials of total degree at most $d$. With the convention $\Pi_d:=\emptyset$ and $\dim\Pi_d:=0$ if $d<0$, one has
\begin{equation} \label{eq:dim_Pi_k}
\dim \Pi_d = \frac12 (d+2)(d+1)_+,\qquad d\in \ZZ,
\end{equation}
where $z_+:=\max\{0,z\}$ for any real number $z$. 

Any point $\bfx$ in a nondegenerate triangle $[\bfv_1, \bfv_2, \bfv_3]$ can be represented by its \emph{barycentric coordinates} $(\beta_1, \beta_2, \beta_3)$, which are uniquely defined by $\bfx = \beta_1\bfv_1 + \beta_2 \bfv_2 + \beta_3\bfv_3$ and $\beta_1 + \beta_2 + \beta_3 = 1$. Similarly, each vector $\bfu$ is uniquely described by its \emph{directional coordinates}, i.e., the triple $(\beta_1 - \beta_1', \beta_2 - \beta_2', \beta_3 - \beta_3')$ with $(\beta_1, \beta_2, \beta_3)$ and $(\beta_1', \beta_2', \beta_3')$ the barycentric coordinates of two points $\bfx$ and $\bfx'$ such that $\bfu = \bfx - \bfx'$. Sometimes we write $\bfx$ as a linear combination of more than three vertices, in which case these coordinates are no longer unique.

A bivariate polynomial $p$ of total degree $d$ defined on a triangle $\PS\subset \RR^2$ is conveniently represented by its \emph{B\'ezier form}
\[
p(\bfx) = \sum_{\substack{i+j+k=d\\i,j,k\geq 0}} c_{ijk} B_{ijk}^d(\bfx),\qquad
B_{ijk}^d(\bfx) := \frac{d!}{i!j!k!}\beta_1^i \beta_2^j \beta_3^k,
\]
with $(\beta_1, \beta_2, \beta_3)$ the barycentric coordinates of $\bfx$ with respect to $\PS$. Here the $B_{ijk}^d$ are the \emph{Bernstein basis polynomials} of degree~$d$ and the coefficients $c_{ijk}$ are the \emph{B\'ezier ordinates} of $p$. We associate each B\'ezier ordinate $c_{ijk}\in \RR$ to the \emph{domain point} $\bfxi_{ijk} := \frac{i}{d}\bfv_1 + \frac{j}{d}\bfv_2 + \frac{k}{d}\bfv_3 \in \RR^2$ and combine them into the \emph{control point} $(\bfxi_{ijk}, c_{ijk}) \in \RR^3$. By connecting any two domain points $\bfxi_{i_1j_1k_1}$ and $\bfxi_{i_2j_2k_2}$ by a line segment whenever $|i_1 - i_2| + |j_1 - j_2| + |k_1 - k_2| = 1$, one arrives at the \emph{domain mesh} of $p$; see Figure \ref{fig:domain_mesh_Bezier}. The \emph{control mesh} is defined similarly by connecting control points.

\begin{figure}
\subfloat[]{\includegraphics[scale=0.67]{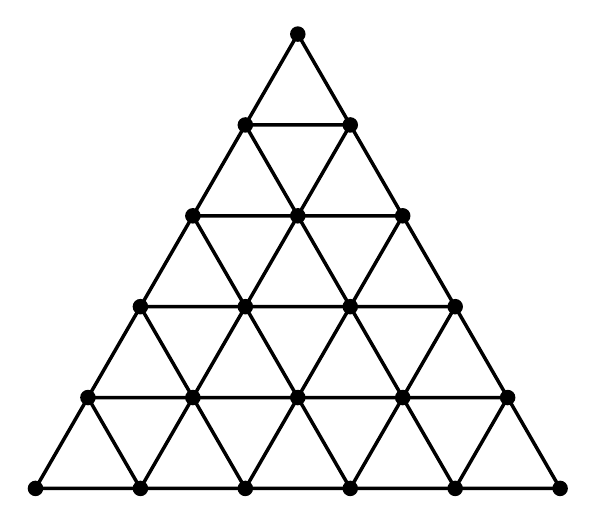}\label{fig:domain_mesh_Bezier}}
\subfloat[]{\includegraphics[scale=0.67]{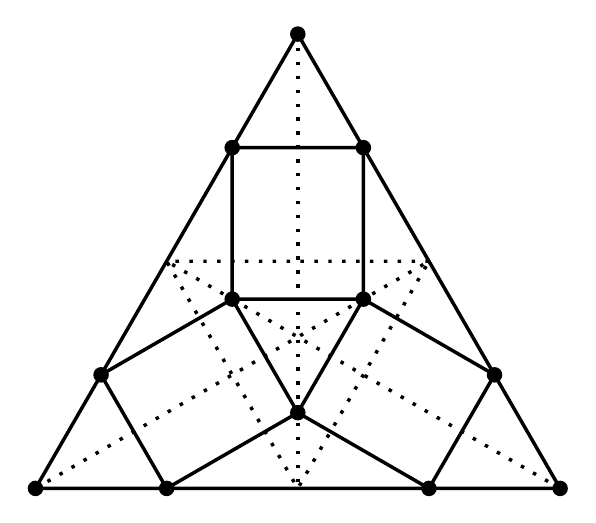}\label{fig:domain_mesh_S-basis}}
\subfloat[]{\includegraphics[scale=0.67]{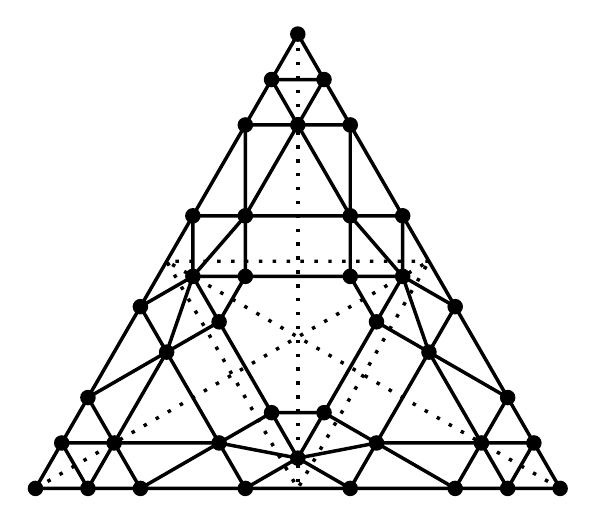}\label{fig:domain_mesh_Bc}}
\caption{Domain mesh for the quintic B\'ezier basis (left), the S-basis from \cite{Cohen.Lyche.Riesenfeld13} (middle) and the basis $\BBB_c$ in this paper (right).}\label{fig:domain_mesh}
\end{figure}

We consider finite multisets $\bfK = \{ \bfv_1^{m_1} \cdots \bfv_n^{m_n} \} \subset \RR^2$, in which the distinct elements $\bfv_1, \ldots, \bfv_n$ are counted with corresponding multiplicities $m_1,\ldots, m_n \geq 0$. Write $|\bfK|:= m_1 + \cdots + m_n$ for the total number of elements in $\bfK$. For any two integers $i,j$, the Kronecker delta is the symbol
\[ \delta_{ij} :=
\left\{\begin{array}{cl}
1 & \text{if } i = j,\\ 0 & \text{otherwise}.
\end{array}\right.\] 
For any set $A\subset \RR^2$, define the indicator function
\[
\bfone_A: \RR^2\longrightarrow \{0, 1\},\qquad
\bfone_A(x) := \left\{
\begin{array}{cl}
1 & \text{if~}x\in A,\\
0 & \text{if~}x\notin A.
\end{array}
\right.
\]

\subsection{The Powell-Sabin 12-split}
\noindent Given a triangle $\PS = [\bfv_1, \bfv_2, \bfv_3]$ with vertices $\bfv_1, \bfv_2, \bfv_3 $ write $e_1 := [\bfv_2, \bfv_3]$, $e_2 := [\bfv_3, \bfv_1]$, and $e_3 := [\bfv_1, \bfv_2]$ for its (nonoriented) edges. Connecting vertices and the edge midpoints $\bfv_4 := (\bfv_1 + \bfv_2)/2$, $\bfv_5 := (\bfv_2 + \bfv_3)/2$ and $\bfv_6 := (\bfv_1 + \bfv_3)/2$, we arrive at the \emph{Powell-Sabin 12-split} $\PSB$ of $\PS$; see Figure \ref{fig:PS12-Labels1} for the labelling of the vertices $\bfv_1, \ldots, \bfv_{10}$ and faces $\PS_1,\ldots,\PS_{12}$.

To decide to which face of the 12-split points on the interior edges belong, we follow the convention in \cite{Seidel92} shown in Figure \ref{fig:TrianglePartition}, which can be quickly computed by Algorithm 1.1 in \cite{Cohen.Lyche.Riesenfeld13}. If $\bfK$ is a multiset satisfying $\bfK \subset \{\bfv_1,\ldots,\bfv_{10}\}$ as sets, its convex hull $[\bfK]$ is a union of some of the faces of the 12-split, and we define the \emph{half-open convex hull} $[\bfK)$ as the union of the corresponding half-open faces depicted in Figure \ref{fig:TrianglePartition}. 

\begin{figure}
\begin{center}
\subfloat[]{\includegraphics[scale=0.8]{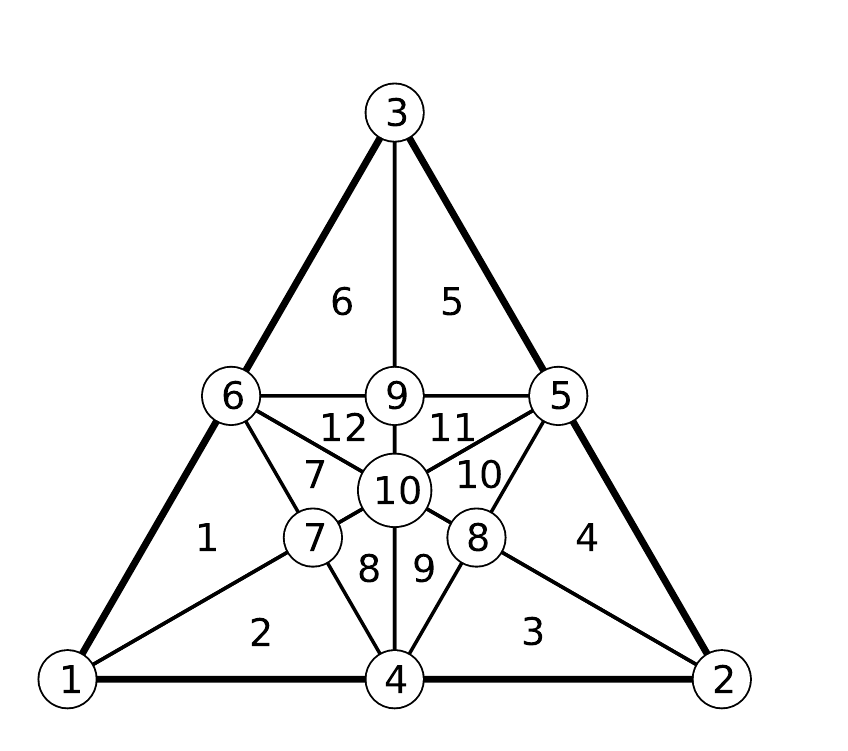}\label{fig:PS12-Labels1}}
\subfloat[]{\hspace{-2em}\includegraphics[scale=0.8]{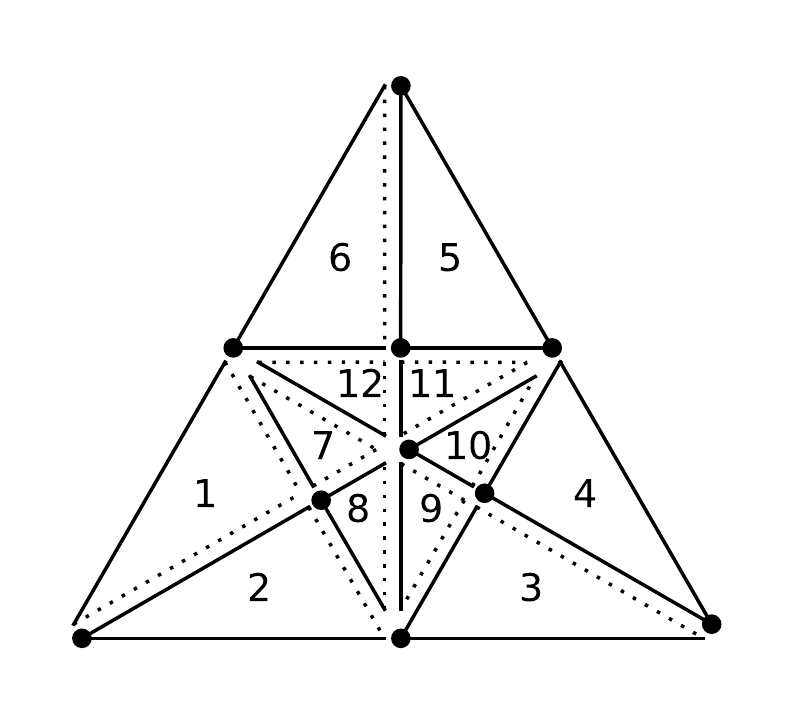}\label{fig:TrianglePartition}}
\end{center}
\caption{The Powell-Sabin 12-split with labelling of vertices and faces (left), and a scheme assigning every point in the macrotriangle to a unique face of the 12-split (right).}\label{fig:PS12}
\end{figure}

\begin{remark}
While in \cite{Zenisek.74} it was shown that on a general triangulation degree~9 is needed to achieve global $C^2$ smoothness, let us show that on the 12-split $\TTT_{12}$ of a triangulation the degree is necessarily at least $5$.
For $r = 0, 1, 2$, a $C^r$ spline of degree $r+2$ on the knot multiset $\{0^{r+3}\, 0.5^2\, 1^{r+3}\}$ is determined by $5+r$ degrees of freedom. To aim for $C^2$ smoothness using quartics, one uses the minimal number of degrees of freedom as follows. At each vertex fix derivatives up to order 2, and on each edge fix one additional value, two cross-boundary derivatives, and three second-order cross-boundary derivatives. Thus in total 36 degrees of freedom are needed, while the space $\SSS^2_4(\PSB)$ only has dimension 34; see Table~\ref{tab:dimensions}.
\end{remark}

\subsection{A basis for the dual space of a space of $C^3$ quintics}\label{sec:macroelement}
\noindent Let $\PSB$ be the 12-split of a triangle $\PS$ with vertices $\VVV = \{\bfv_1, \bfv_2, \bfv_3\}$ and edges $\EEE = \{[\bfv_1, \bfv_2]$, $[\bfv_2, \bfv_3], [\bfv_3, \bfv_1]\}$. For any edge $e = [\bfv_i, \bfv_j]\in \EEE$ with opposing vertex $\bfv_k$, let
\[ \bfq_{1,e} := \frac{3\bfv_i + \bfv_j}{4},\qquad \bfm_e := \frac{\bfv_i + \bfv_j}{2},\qquad \bfq_{2,e} := \frac{\bfv_i + 3\bfv_j}{4} \]
be its midpoint and quarterpoints. With $\varepsilon_{\bfv}$ the point evaluation at $\bfv$ and $D_\bfu$ the directional derivative in the direction $\bfu$, let
\begin{equation}\label{eq:Lambda}
\Lambda :=
   \bigcup_{\bfv\in \VVV}\ \bigcup_{\substack{i+j\leq 3\\i,j\geq 0}} \{\varepsilon_{\bfv} D^i_{\bfx_{\bfv}} D^j_{\bfy_{\bfv}} \} \cup
   \bigcup_{e\in \EEE} \{\varepsilon_{\bfq_{1,e}} D^2_{\bfu_e}, \varepsilon_{\bfm_e} D_{\bfu_e}, \varepsilon_{\bfq_{2,e}} D^2_{\bfu_e}\};
\end{equation}
see Figure~\ref{fig:PS12-1}. Here, for every vertex $\bfv$ and edge $e$, the symbols $\bfx_{\bfv}, \bfy_{\bfv}$ are any linearly independent vectors and $\bfu_e$ is any vector not tangent to $e$, for example the outside unit normal as shown in Figure \ref{fig:PS12-1}. It was shown in \cite[Theorem 4]{Lyche.Muntingh14} that $\Lambda$ is a basis for the dual space to $\SSS^3_5(\PSB)$, which therefore has dimension~39.  

\begin{figure} 
\begin{center}
\includegraphics[scale=0.69]{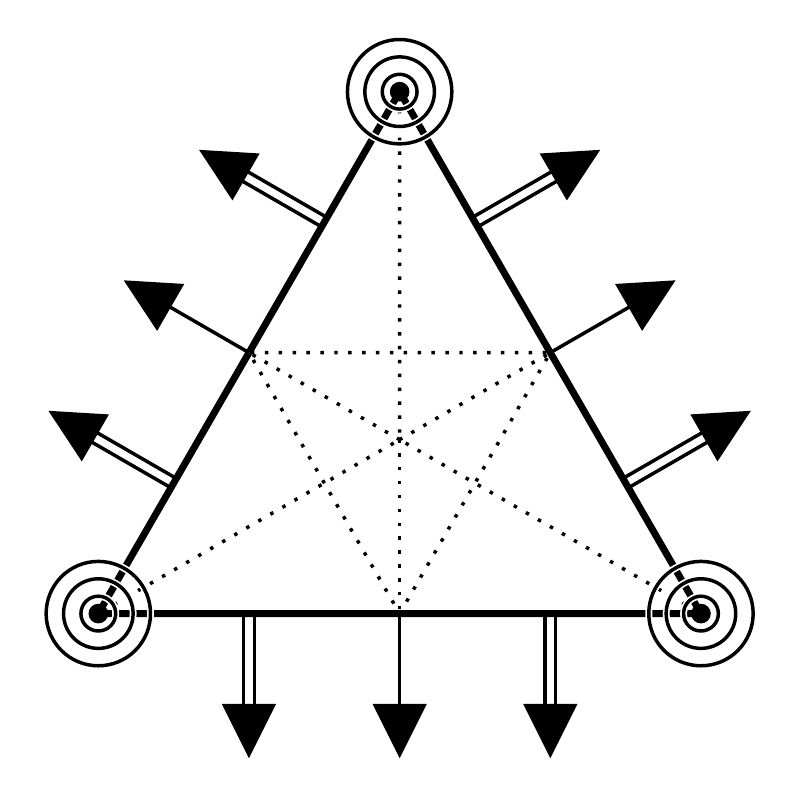}
\end{center}
\caption{A basis for the dual space of $\SSS^3_5$ on a single triangle. A bullet represents a point evaluation, three circles represent all derivatives up to order three, and a single and double arrow represent a first- and second-order directional derivative. These derivatives are evaluated at the rear end of the arrows, which are located at the midpoints and quarterpoints.}\label{fig:PS12-1}
\end{figure}

\section{Dimension formulas}\label{sec:DimensionFormula}
\noindent Consider a polygonal domain $\Omega\subset \RR^2$ with a triangulation $\TTT$ with sets of vertices $\VVV$, edges $\EEE$, faces $\FFF$, and 12-split refinement $\TTT_{12}$. Let
\[ \SSS^{2,3}_5(\TTT_{12}) := \{s\in \SSS^2_5(\TTT_{12}) : s\in C^3(\PS)\cap C^3(\bfv)\ \text{for all}\ \PS\in \FFF, \bfv \in \VVV\}.\]
Here $s\in C^3(\bfv)$ means that all polynomials $s|_{\PSsmall}$ such that $\PS$ is a triangle with vertex at $\bfv$ have common derivatives up to order three at the point $\bfv$. Note that if $\TTT$ consists of a single triangle, then $\SSS^{2,3}_5(\TTT_{12}) = \SSS^3_5(\TTT_{12})$.

Since $\Lambda$ from \eqref{eq:Lambda} specifies the value and partial derivatives up to order three at each vertex of $\TTT$ and the value, first- and second-order cross-boundary derivatives at each edge of $\TTT$, the following theorem is an immediate consequence of \cite[Theorem 4]{Lyche.Muntingh14}.

\begin{theorem}\label{thm:dimS}
For any triangulation $\TTT$ with $|\VVV|$ vertices and $|\EEE|$ edges, the set $\Lambda$ is a basis for the dual space of $\SSS^{2,3}_5(\TTT_{12})$. In particular
\begin{equation*}
\dim \SSS^{2,3}_5(\TTT_{12}) = 10|\VVV| + 3|\EEE|.
\end{equation*}
\end{theorem}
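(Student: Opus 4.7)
The plan is to deduce the global statement from the single-triangle result \cite[Theorem~4]{Lyche.Muntingh14} recalled in Section~\ref{sec:macroelement}. A direct count gives $|\Lambda|=10|\VVV|+3|\EEE|$, since each vertex contributes $1+2+3+4=10$ pure and mixed derivative functionals of order at most three, and each edge contributes three cross-boundary functionals. It therefore suffices to prove that the evaluation map $s\mapsto(\lambda(s))_{\lambda\in\Lambda}$ is a bijection $\SSS^{2,3}_5(\TTT_{12})\to\RR^{\Lambda}$, from which the dimension formula follows.

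Fix a choice of $\bfx_{\bfv},\bfy_{\bfv}$ at every $\bfv\in\VVV$ and of $\bfu_e$ at every $e\in\EEE$, so that the elements of $\Lambda$ are well-defined global functionals on $\SSS^{2,3}_5(\TTT_{12})$. For injectivity, suppose $\lambda(s)=0$ for all $\lambda\in\Lambda$. By the definition of $\SSS^{2,3}_5(\TTT_{12})$, the restriction $s|_{\PSsmall}$ to any macrotriangle $\PS\in\FFF$ lies in $\SSS^3_5(\PSB)$, and the $3\cdot 10=30$ vertex functionals and $3\cdot 3=9$ edge functionals of $\Lambda$ associated to $\PS$ coincide with the set~\eqref{eq:Lambda} for $\PS$ alone. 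Since the latter is a basis of the dual of $\SSS^3_5(\PSB)$, we conclude $s|_{\PSsmall}=0$ on every $\PS$, and hence $s=0$.

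For surjectivity, prescribe arbitrary values $(a_\lambda)_{\lambda\in\Lambda}$ and invoke the single-triangle theorem on each $\PS\in\FFF$ to obtain a piece $s|_{\PSsmall}\in\SSS^3_5(\PSB)$ realizing the local data. By construction $s|_{\PSsmall}\in C^3(\PS)$, and the shared vertex derivatives at each $\bfv\in\VVV$ automatically yield $s\in C^3(\bfv)$. The main obstacle is then to verify $C^2$ smoothness across each interior macro-edge $e=\PS\cap\PS'$.

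I would handle this by a univariate dimension count along $e$. The restrictions $s|_e$, $D_{\bfu_e}s|_e$ and $D^2_{\bfu_e}s|_e$ are piecewise polynomials of degrees $5,4,3$ on the two half-edges joined at $\bfm_e$, with smoothness $C^3,C^2,C^1$ at $\bfm_e$, hence lie in univariate spline spaces of dimensions $8,7,6$ respectively. These dimensions match exactly the number of functionals in $\Lambda$ that see each quantity: for $s|_e$, the four edge-direction derivatives of orders $0,1,2,3$ at each endpoint give $8$; for $D_{\bfu_e}s|_e$, three mixed derivatives $D_{\bfu_e}D^i_{\text{edge}}$ ($i=0,1,2$) at each endpoint together with $\varepsilon_{\bfm_e}D_{\bfu_e}$ give $7$; for $D^2_{\bfu_e}s|_e$, two mixed derivatives $D^2_{\bfu_e}D^i_{\text{edge}}$ ($i=0,1$) at each endpoint together with $\varepsilon_{\bfq_{1,e}}D^2_{\bfu_e}$ and $\varepsilon_{\bfq_{2,e}}D^2_{\bfu_e}$ give $6$. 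A routine Hermite-style unisolvence check on each of these univariate spline spaces then shows that the prescribed data determine the spline uniquely, so the splines coming from the $\PS$ side and the $\PS'$ side coincide on $e$, yielding $C^2$ continuity across $e$ and completing the proof.
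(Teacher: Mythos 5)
Your proposal is correct and takes essentially the same route as the paper, which treats the theorem as an immediate consequence of the single-triangle result \cite[Theorem 4]{Lyche.Muntingh14} precisely because $\Lambda$ prescribes the order-three jets at the vertices and the cross-boundary data on the edges. Your write-up merely makes explicit the injectivity/surjectivity argument and the univariate trace-unisolvence checks (on spaces of dimension $8$, $7$, $6$) that the paper leaves implicit, and those checks do go through.
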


Next, let $\PSB$ denote the 12-split triangulation of a single triangle $\PS$. For future reference, we state the following formula for the dimension of the space of $C^r$ splines of degree $d$ on $\PSB$, which is a special case of Theorem 3.1 in \cite{Chui.Wang83} (also compare \cite{Schenck.Stillman97}).

\begin{theorem}\label{thm:DimensionFormula}
For any integers $d,r$ with $d\ge 0$ and $d\geq r\geq -1$,
\begin{equation}\label{eq:dimeq}
\begin{aligned}
\dim \SSS^r_d(\PSB) =\ & \frac12(r+1)(r+2) + \frac92(d-r)(d-r+1)\\
                       &+\frac32(d-2r-1)(d-2r)_+ + \sum_{j=1}^{d-r}(r-2j+1)_+.
\end{aligned}
\end{equation}
\end{theorem}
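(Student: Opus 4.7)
The plan is to specialize the general dimension formula of Chui and Wang (Theorem~3.1 of \cite{Chui.Wang83}) to the triangulation $\PSB$. First, I would record the combinatorial and geometric data: $\PSB$ has $12$ triangles, $21$ edges (of which $6$ lie on the boundary), and $10$ vertices, namely the three corners, the three midpoints of the macrotriangle's edges, the three midpoints of the medial triangle's edges, and the centroid. Four of these vertices are interior, and the distinguishing geometric feature is that the three medians of the macrotriangle are concurrent at the centroid rather than being in general position.

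Next, I would feed this data into the Chui--Wang formula, which expresses $\dim\SSS^r_d$ of a triangulation in terms of vertex and edge counts together with a correction at each interior vertex depending on the number of distinct edge slopes meeting there. After substitution, I would regroup the resulting expression into the four summands of \eqref{eq:dimeq}: the leading $\tfrac{1}{2}(r+1)(r+2)$ is the dimension of $\Pi_r$; the term $\tfrac{9}{2}(d-r)(d-r+1)$ arises from the generic interior-edge contributions after combining; the term $\tfrac{3}{2}(d-2r-1)(d-2r)_+$ comes from the three edges of the medial triangle, each of whose two halves is collinear at the corresponding edge midpoint and therefore forces a truncation; and the sum $\sum_{j=1}^{d-r}(r-2j+1)_+$ encodes the extra rigidity contributed by the centroid, where the three medians are concurrent rather than three generic lines.

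Finally, I would verify the answer at tractable instances ($\dim\SSS^1_2(\PSB)=12$, $\dim\SSS^3_5(\PSB)=39$, and the trivial case $r=-1$) to check that the specialization has been carried out correctly and that no term has been miscounted.

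The step I expect to be the main obstacle is the regrouping into the claimed four summands. The Chui--Wang formula naturally produces contributions indexed by vertices and by edges, and the interior vertices of $\PSB$ come in three topological types (the centroid, with six incident edge segments and three distinct slopes; the three medial midpoints, each with four incident segments and two slopes; and the three boundary edge midpoints, each with five incident segments, one lying on the boundary). Reconciling these heterogeneous contributions with the uniform sum $\sum_{j=1}^{d-r}(r-2j+1)_+$ --- in particular, placing the $(\,\cdot\,)_+$ truncations correctly so that the formula degenerates properly as $r\to d$ and produces no spurious negative contributions --- will require the bulk of the algebraic bookkeeping.
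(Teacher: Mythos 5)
Your approach is legitimate but genuinely different from the proof the paper actually gives. You propose to apply Chui--Wang's Theorem~3.1 directly to the full 12-split, viewed as a cross-cut partition (six cross-cuts: three medians and three midlines; interior vertices at the centroid, where three cross-cuts with three distinct slopes meet, and at the three medial-edge midpoints, where two cross-cuts meet), and then to regroup the vertex- and edge-indexed contributions into the four summands of \eqref{eq:dimeq}. This is consistent with the paper's own framing of the theorem as a special case of Chui--Wang, and your numerical spot-checks ($12$, $39$, and the $r=-1$ case) all come out right. The paper, however, proves the formula by a divide-and-conquer argument: it splits the 12-split into a central cell (the medial triangle with its 6-split) and three boundary cells, shows that
\[
\dim\SSS^r_d(\PSB)=\dim\SSS^r_d\bigl(\text{central cell}\bigr)+3\dim\Pi_{d-r-1}+3\dim\Pi_{d-2r-2}
\]
by an explicit factorization argument (a spline vanishing on the central cell restricts on each boundary cell to $l_0^{r+1}q_0$ and $l_0^{r+1}(q_0+l_1^{r+1}q)$ with $q_0\in\Pi_{d-r-1}$, $q\in\Pi_{d-2r-2}$), and then invokes only the single-cell dimension formula (Theorem~9.3 of Lai--Schumaker) for the central cell, which has $6$ interior edges and $3$ distinct slopes. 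The paper's route buys transparency: each of the four summands has an unambiguous provenance (the $\tfrac92$ coefficient splits as $6+3$ edge contributions, the $(d-2r)$-truncated term is exactly $3\dim\Pi_{d-2r-2}$ from the Bezout product $l_0^{r+1}l_1^{r+1}$ on the boundary cells, and the final sum is the centroid's three-slope correction), and it only needs the classical cell formula rather than the full cross-cut theorem. Your route is shorter on paper but, as you anticipate, defers all the difficulty to the regrouping of three heterogeneous vertex types; be aware that your heuristic attribution of $\tfrac32(d-2r-1)(d-2r)_+$ to the collinearity at the medial-edge midpoints is a guess that the actual computation may reorganize --- in the paper's accounting that term arises instead from the interaction of the medial edge with the median stub inside each corner cell --- so the bookkeeping you flag as the main obstacle is indeed where the proof still has to be done.
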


To quickly look up $\dim \SSS^r_d(\PSB)$ for small values of $r$ and $d$, we have listed these first dimensions in Table \ref{tab:dimensions}.

\begin{table}
\begin{tabular*}{\columnwidth}{c@{\extracolsep{\stretch{1}}}*{11}{r}}
\toprule
 $\dim \SSS^r_d (\PSB)$ & \ $C^{-1}$ & $C^0$ & $C^1$ & $C^2$ & $C^3$ & $C^4$ & $C^5$ & $C^6$ & $C^7$ & $C^8$ & $C^9$\\
\midrule
$d =  0$ &  12 &   1 \\
$d =  1$ &  36 &  10 &   3 \\
$d =  2$ &  72 &  31 &  12 &   6 \\
$d =  3$ & 120 &  64 &  30 &  16 &  10 \\
$d =  4$ & 180 & 109 &  60 &  34 &  21 &  15 \\
$d =  5$ & 252 & 166 & 102 &  61 & \cellcolor{LightGray}39 &  27 &  21 \\
$d =  6$ & 336 & 235 & 156 & 100 &  66 &  46 &  34 &  28 \\
$d =  7$ & 432 & 316 & 222 & 151 & 102 &  73 &  54 &  42 &  36 \\
$d =  8$ & 540 & 409 & 300 & 214 & 150 & 109 &  81 &  63 &  51 &  45 \\
$d =  9$ & 660 & 514 & 390 & 289 & 210 & 154 & 117 &  91 &  73 &  61 & 55 \\
\bottomrule
\end{tabular*}
\caption[]{Dimensions of $\SSS^r_d (\PSB)$, with $(r,d) = (3,5)$ highlighted.\label{tab:dimensions}}
\end{table}

\section{Simplex splines}\label{sec:simplexsplines}
\noindent In this section we first recall the definition and some basic properties of the simplex spline, and then proceed to determine the $C^3$ quintic simplex splines on the 12-split that reduce to a B-spline on the boundary. For a comprehensive account of the theory of simplex splines, see \cite{Micchelli79, Prautsch.Boehm.Paluszny02}.

\subsection{Definition and properties}
The following definition of the simplex spline is convenient for our purposes.

\begin{definition}
For any finite multiset $\bfK = \{ \bfv_1^{m_1} \cdots \bfv_{10}^{m_{10}} \} \subset \RR^2$ composed of vertices of $\PSB$, the \emph{(area normalized) simplex spline} $Q[\bfK]: \RR^2 \longrightarrow \RR$ is recursively defined by
\[
Q[\bfK](\bfx) :=
\left\{ \begin{array}{cl}
0 & \text{if~}\area([\bfK]) = 0,\\
\bfone_{[\bfK)}(\bfx)\frac{\area(\PSsmall)}{\area([\bfK])} & \text{if~}\area([\bfK]) \neq 0\text{~and~} |\bfK| = 3,\\
\sum_{j = 1}^{10} \beta_j Q[\bfK\backslash \bfv_j](\bfx) &  \text{if~}\area([\bfK]) \neq 0\text{~and~} |\bfK| > 3,\\
\end{array} \right. \]
with $\bfx = \beta_1 \bfv_1 + \cdots + \beta_{10} \bfv_{10}, \beta_1 + \cdots + \beta_{10} = 1$, and $\beta_i = 0$ whenever $m_i = 0$.
\end{definition}

By Theorem 4 in \cite{Micchelli79} this definition is independent of the choice of the $\beta_j$. It is well known that $Q[\bfK]$ is a piecewise polynomial with support $[\bfK] \subset \PS$ and of total degree at most $|\bfK| - 3$. One shows by induction on $|\bfK|$ that $\int_{\RR^2} Q[\bfK](\bfx) \text{d}\bfx = \area(\PS)\cdot {|\bfK| - 1\choose 2}^{-1}$. Although $M[\bfK] := \area(\PS)^{-1}{|\bfK|- 1\choose 2}Q[\bfK]$ has unit integral and is used more frequently, our discussion is simpler in terms of $Q[\bfK]$.

Whenever $m_7 = m_8 = m_9 = m_{10} = 0$, we use the graphical notation
\[ \SimSgeneric := Q[\bfv_1^i \bfv_2^j \bfv_3^k \bfv_4^l \bfv_5^m \bfv_6^n]. \]

\begin{example}\label{ex:S-basis}
The linear S-spline basis in \cite{Cohen.Lyche.Riesenfeld13} only uses vertices $\bfv_1,\ldots,\bfv_6,$ $\bfv_{10}$, while the quadratic S-spline basis only uses $\bfv_1,\ldots,\bfv_6$. It is given by
\begin{equation}\label{eq:S-splines}
S_{j,2} = \frac{\area([\bfK_{j,2}])}{6} M[\bfK_{j,2}] = \frac{\area([\bfK_{j,2}])}{\area(\PSsmall)}Q[\bfK_{j,2}],\hfill j=1,\ldots,12,
\end{equation}
where by (2.5) in \cite{Cohen.Lyche.Riesenfeld13}, as (unordered) sets,
\begin{equation}\label{eq:S-basis}
\begin{aligned}
\{Q[\bfK_{1,2}], \ldots, Q[\bfK_{12,2}]\} = \Big\{
& \SimS{300101}, \SimS{030110}, \SimS{003011}, \SimS{210101}, \SimS{021110}, \SimS{102011}, \\
& \SimS{110111}, \SimS{011111}, \SimS{101111}, \SimS{120110}, \SimS{201101}, \SimS{012011}
\Big\}.
\end{aligned}
\end{equation}
\end{example}

\begin{example}
If $\bfK = \{\bfv_i^{\mu_i+1} \bfv_j^{\mu_j+1} \bfv_k^{\mu_k+1} \}$ has three distinct elements with $\area([\bfK]) > 0$, then, with $(\beta_i, \beta_j, \beta_k)$ the barycentric coordinates of $\bfx$ with respect to $[\bfv_i, \bfv_j, \bfv_k]$, it follows by induction that
\[
Q[\bfK](\bfx) = \bfone_{[\bfK)}(\bfx) \frac{\area(\PS)}{\area([\bfK])}\frac{(\mu_i + \mu_j + \mu_k)!}{\mu_i! \mu_j!\mu_k!}
\beta_i^{\mu_i} \beta_j^{\mu_j} \beta_k^{\mu_k}
\]
is, up to a scalar, a Bernstein polynomial on $[\bfK)$.
\end{example}

\subsubsection*{Continuity} For any edge $e$ of $\PSB$, if $\bfK$ has at most $m$ knots (counting multiplicities) along the affine hull of $e$, then $Q[\bfK]$ is $|\bfK| - m - 2$ times continuously differentiable over $e$. For instance, every $C^3$ quintic simplex spline on $\PSB$ has at most three knots along the affine hull of any interior edge $e$.

\subsubsection*{Differentiation}
Let $\bfK = \{\bfv_1^{m_1}\cdots \bfv_{10}^{m_{10}}\}$ be a finite multiset. If
$\bfu = \alpha_1 \bfv_1 + \cdots + \alpha_{10}\bfv_{10}$ is such that $\alpha_1 + \cdots + \alpha_{10} = 0$ and $\alpha_j = 0$ whenever $m_j = 0$, then one has a differentiation formula
\begin{equation}\label{eq:MicchelliDifferentiation}
D_\bfu Q[\bfK] = (|\bfK| - 3)\sum_{j = 1}^{10} \alpha_j Q[\bfK \backslash \bfv_j].
\end{equation}

\subsubsection*{Knot insertion}
If $\bfy = \beta_1 \bfv_1 + \cdots + \beta_{10}\bfv_{10}$ is such that $\beta_1 + \cdots + \beta_{10} = 1$ and $\beta_j = 0$ whenever $m_j = 0$, then one has a knot insertion formula
\begin{equation}\label{eq:knotinsertion}
Q[\bfK] = \sum_{j = 1}^{10} \beta_j Q[(\bfK \sqcup \bfy)\backslash \bfv_j].
\end{equation}
For instance, if $\bfv_1, \bfv_2 \in \bfK$, then, since $\bfv_4 = \frac12 \bfv_1 + \frac12 \bfv_2$, 
\begin{equation}
Q[\bfK] = \frac12 Q[(\bfK\sqcup \bfv_4) \backslash \bfv_1] + \frac12 Q[(\bfK\sqcup \bfv_4) \backslash \bfv_2],
\end{equation}
and for example
\[ \SimS{141110} = \frac12 \SimS{041210} + \frac12 \SimS{131210}.\]

\subsubsection*{Restriction to an edge}
Let $e = [\bfv_i, \bfv_k]$ be an edge of $\PS$ with midpoint $\bfv_j$ and let $\varphi_{ik}(t) := (1-t)\bfv_i + t\bfv_k$. By induction on $|\bfK|$,
\begin{equation}\label{eq:restriction}
Q[\bfK] \circ \varphi_{ik}(t) =
\left\{ \begin{array}{cl}
                        0 & \text{if }m_i + m_j + m_k < |\bfK| - 1,\\
\frac{\area(\PSsmall)}{\area([\bfK])}B(t)  & \text{if }m_i + m_j + m_k = |\bfK| - 1,
\end{array} \right.
\end{equation}
where $B$ is the univariate B-spline with knot multiset $\{0^{m_i}\,0.5^{m_j}\,1^{m_k}\}$.

We say that $Q[\bfK]$ \emph{reduces to a B-spline on the boundary} when $B$ is one of the consecutive univariate quintic B-splines $B_1^5, \ldots, B_8^5$ on the open knot multiset $\{0^6\, 0.5^2\, 1^6\}$; see Table~\ref{tab:shorthandsBi}. Similarly a basis $\BBB = \{S_1, \ldots, S_{39}\}$ of $\SSS^3_5(\PSB)$ \emph{reduces to a B-spline basis on the boundary} when
\[ \big\{S_1 \circ \varphi_{ik}, \ldots, S_{39} \circ \varphi_{ik}\big\} = \{ \big(B^5_1 \big)^1\, \cdots\, \big(B^5_8\big)^1\, 0^{31} \},\qquad 1\leq i < k\leq 3,\]
as multisets. This scaling of $\BBB$ ensures simple $C^0$ conditions for connecting two adjacent patches expressed in terms of $\BBB$.

\subsubsection*{Symmetries}
\noindent The dihedral group $S_3$ of the equilateral triangle consists of the identity, two rotations and three reflections, i.e.,
\begin{center}
\includegraphics[scale=0.55]{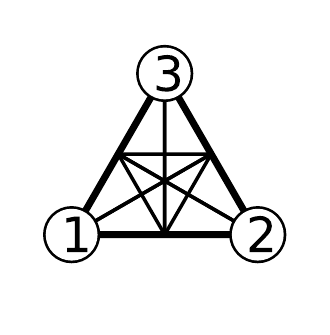}
\includegraphics[scale=0.55]{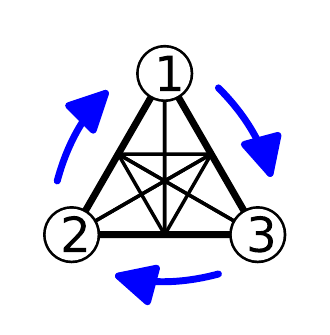}
\includegraphics[scale=0.55]{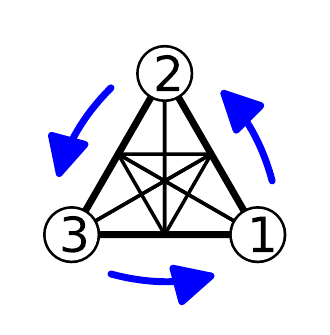}
\includegraphics[scale=0.55]{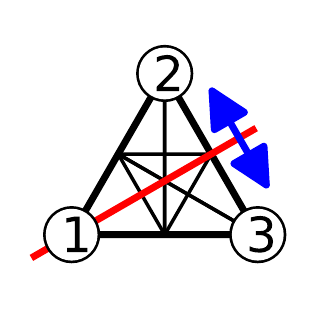}
\includegraphics[scale=0.55]{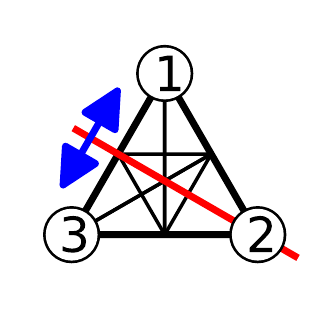}
\includegraphics[scale=0.55]{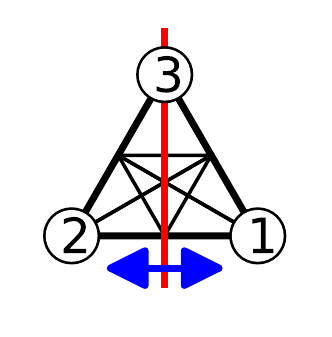}
\end{center}
The affine bijection sending $\bfv_k$ to $(\cos\,2\pi k/3, \sin\,2\pi k/3)$, for $k = 1, 2, 3$, maps $\PSB$ to the 12-split of an equilateral triangle. Through this correspondence, the dihedral group permutes the vertices $\bfv_1,\ldots,\bfv_{10}$ of $\PSB$. Every such permutation $\sigma$ induces a bijection $Q[\bfv_1^{m_1}\cdots\bfv_{10}^{m_{10}}] \longmapsto Q[\sigma(\bfv_1)^{m_1}\cdots $ $\sigma(\bfv_{10})^{m_{10}}]$ on the set of all simplex splines on $\PSB$. For any set $\BBB$ of simplex splines, we write
\[ [\BBB]_{S_3} := \{Q[\sigma(\bfK)]\,:\, Q[\bfK]\in \BBB,\ \sigma\in S_3\} \]
for the \emph{$S_3$ equivalence class of $\BBB$}, i.e., the \emph{set} of simplex splines related to $\BBB$ by a symmetry in $S_3$. For example,
\begin{align*}
\left[\SimS{600101}\right]_{S_3} & = \left\{\SimS{600101}, \SimS{060110}, \SimS{006011}\right\},\\
\left[\SimS{500201}\right]_{S_3} & = \left\{\SimS{500201}, \SimS{500102}, \SimS{050210}, \SimS{050120}, \SimS{005021}, \SimS{005012}\right\},
\end{align*}
and \eqref{eq:S-basis} takes the compact form
\[ \left[\SimS{300101}, \SimS{210101}, \SimS{110111}\right]_{S_3}.\]
We say that $\BBB$ is \emph{$S_3$-invariant} whenever $[\BBB]_{S_3} = \BBB$.

\subsection{$C^3$ quintic simplex splines on the 12-split}
\noindent Any simplex spline $Q[\bfK]$ of degree $d=5$ on $\PSB$ is specified by a multiset $\bfK = \{\bfv_1^{m_1}\cdots \bfv_{10}^{m_{10}}\}$ satisfying
\begin{equation}\label{eq:MultiplicitySum}
m_1 + m_2 + \cdots + m_{10} = d + 3 = 8.
\end{equation}

\begin{lemma}
Suppose a quintic simplex spline $Q[\bfK]$ on $\PSB$ is of class $C^3$. Then
\begin{equation}\label{eq:WrongKnotLines2}
m_7 = m_8 = m_9 = m_{10} = 0,
\end{equation}
and
\begin{equation}\label{eq:InteriorSmoothness2}
m_1 + m_5,\ m_3 + m_4,\ m_2 + m_6,\ m_4 + m_6,\ m_4 + m_5,\ m_5 + m_6\leq 3,
\end{equation}
whenever both multiplicities are nonzero.
\end{lemma}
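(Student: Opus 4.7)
The plan is to translate the $C^3$ hypothesis into multiplicity bounds along the knot-lines of $\PSB$ using the continuity remark recalled just above the lemma, and then to extract \eqref{eq:WrongKnotLines2} and \eqref{eq:InteriorSmoothness2} by elementary counting. Since $|\bfK|=8$, the remark forces the multiplicity of $\bfK$ along the affine hull of each interior edge of $\PSB$ to be at most $|\bfK|-3-2=3$. The fifteen interior edges span only six distinct lines: the three medians (each joining a corner $\bfv_i$ with the opposite edge midpoint and passing through one of $\bfv_7,\bfv_8,\bfv_9$ and through the centroid $\bfv_{10}$) and the three medial-triangle edges (each joining two edge midpoints and passing through one of $\bfv_7,\bfv_8,\bfv_9$), which gives six inequalities.

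First I would establish $m_{10}=0$ by summing the three median inequalities. Since $\bfv_{10}$ is the only vertex lying on all three medians while each of $\bfv_1,\ldots,\bfv_9$ lies on exactly one, the sum equals
\[
\sum_{i=1}^9 m_i + 3m_{10} \;=\; (8-m_{10}) + 3m_{10} \;=\; 8+2m_{10} \;\le\; 9,
\]
forcing $m_{10}\le 1/2$ and hence $m_{10}=0$.

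Next I would tackle $m_7=m_8=m_9=0$. For each $k\in\{7,8,9\}$, I would pair the median inequality and the medial-edge inequality that both pass through $\bfv_k$, obtaining a compound bound of the form $(m_i+m_j+m_{i'}+m_{j'})+2m_k\le 6$. Summing the three such compounds and using $\sum m_i=8$ with $m_{10}=0$ yields only $2(m_4+m_5+m_6)+(m_7+m_8+m_9)\le 10$, and this is the main obstacle, since the single inequality is not tight enough by itself to force the three multiplicities to vanish. To close the gap I would exploit the algebraic identity $\bfv_7=(\bfv_4+\bfv_6)/2=(\bfv_1+\bfv_5)/2$ (and its cyclic counterparts for $\bfv_8$ and $\bfv_9$) through the knot-insertion formula~\eqref{eq:knotinsertion}, which converts any spline with $m_k\ge 1$ into a combination of splines with strictly smaller ``wrong'' multiplicities on which the $C^3$ constraint is manifestly violated; alternatively, a short case analysis using non-negativity and integrality of the $m_i$ should suffice.

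Finally, once \eqref{eq:WrongKnotLines2} is in hand, substituting $m_7=m_8=m_9=m_{10}=0$ into the six interior-line inequalities immediately yields $m_1+m_5,\ m_2+m_6,\ m_3+m_4\le 3$ (from the medians) and $m_4+m_5,\ m_5+m_6,\ m_4+m_6\le 3$ (from the medial edges), which is~\eqref{eq:InteriorSmoothness2}.
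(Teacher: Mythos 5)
Your reduction to multiplicity counts along the six interior knot lines is set up correctly, your derivation of $m_{10}=0$ by summing the three median inequalities is exactly the argument the paper uses, and the final deduction of \eqref{eq:InteriorSmoothness2} from \eqref{eq:WrongKnotLines2} is fine. The gap is precisely where you flag it, in $m_7=m_8=m_9=0$, and neither of your proposed repairs can close it. The six interior-line inequalities admit genuine nonnegative integer solutions with $m_7\geq 1$: for instance $\bfK=\{\bfv_1^2\,\bfv_2^3\,\bfv_3^2\,\bfv_7^1\}$ satisfies all six (the medial-edge line through $\bfv_7$ carries no other knot, so its constraint is vacuous), so no case analysis on those inequalities alone can succeed. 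The knot-insertion idea also fails as a matter of logic: \eqref{eq:knotinsertion} expresses $Q[\bfK]$ as a combination of splines with one \emph{more} knot, not fewer, and in any case a linear combination of splines that individually violate $C^3$ can perfectly well be $C^3$ --- that is the whole point of smoothness conditions --- so exhibiting such a decomposition produces no contradiction.

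The ingredient you are missing is a second family of constraints coming from knot lines that are \emph{not} edges of the 12-split. Pairs such as $(\bfv_2,\bfv_7)$, $(\bfv_1,\bfv_8)$, $(\bfv_7,\bfv_8)$, and so on span lines that cross the interiors of faces of $\PSB$; a quintic simplex spline that is piecewise polynomial on $\PSB$ (and in particular one lying in $\SSS^3_5(\PSB)$) cannot carry two knots on such a line, which forces the products $m_1m_8=m_1m_9=m_8m_9=0$, $m_2m_7=m_2m_9=m_7m_9=0$, $m_3m_7=m_3m_8=m_7m_8=0$; this is the paper's \eqref{eq:WrongKnotLines}. With these in hand, $m_7\geq1$ forces $m_2=m_3=m_8=m_9=0$, whence $8=(m_1+m_5+m_7+m_{10})+(m_4+m_6)\leq 3+2$ by \eqref{eq:InteriorSmoothness}, a contradiction; the same argument kills $m_8$ and $m_9$. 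Note that in the example above it is exactly the line through $\bfv_2$ and $\bfv_7$ --- not an edge of $\PSB$, carrying four knots and hence only $C^2$ --- that destroys the smoothness, and only the wrong-knot-line conditions detect it.
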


\begin{proof}
In the 12-split, certain knot lines do not appear, leading to the conditions
\begin{equation}\label{eq:WrongKnotLines}
\begin{aligned}
& m_1 m_8 = m_1 m_9 = m_8 m_9 = 0,\\
& m_2 m_7 = m_2 m_9 = m_7 m_9 = 0,\\
& m_3 m_7 = m_3 m_8 = m_7 m_8 = 0.
\end{aligned}
\end{equation}

To achieve $C^3$ smoothness over the remaining knot lines, the sum of the multiplicities along each line in the 12-split has to be at most three,
\begin{equation}
\begin{aligned}
 & m_1 + m_5 + m_7 + m_{10} \leq 3,\qquad m_4 + m_6 + m_7 \leq 3,\label{eq:InteriorSmoothness}\\
 & m_2 + m_6 + m_8 + m_{10} \leq 3,\qquad m_4 + m_5 + m_8 \leq 3,\\
 & m_3 + m_4 + m_9 + m_{10} \leq 3,\qquad m_5 + m_6 + m_9 \leq 3,
\end{aligned}
\end{equation}
whenever the line contains at least two knots with positive multiplicities.

Suppose $m_7 \geq 1$. Then \eqref{eq:WrongKnotLines} implies $m_2 = m_3 = m_8 = m_9 = 0$, and by \eqref{eq:MultiplicitySum} and \eqref{eq:InteriorSmoothness}, $8 = (m_1 + m_5 + m_7 + m_{10}) + (m_4 + m_6)\leq 3 + 2$, which is a contradiction. It follows that $m_7$ (and similarly $m_8$ and $m_9$) must be equal to zero. Moreover, by \eqref{eq:MultiplicitySum} and \eqref{eq:InteriorSmoothness},
$8 = (m_1 + m_5 + m_7 + m_{10}) + (m_2 + m_6 + m_8 + m_{10}) + (m_3 + m_4 + m_9 + m_{10}) - 2m_{10} \leq 9 - 2m_{10}$. Therefore $m_{10} = 0$ and \eqref{eq:WrongKnotLines2} holds, and \eqref{eq:InteriorSmoothness2} follows immediately from \eqref{eq:InteriorSmoothness}.
\end{proof}

In addition we demand that $Q[\bfK]$ reduces to a B-spline on the boundary. By \eqref {eq:restriction}, if $m_i + m_j + m_k < 7$, then $Q[\bfK]|_e = 0$ and this condition is satisfied. The remaining case $m_i + m_j + m_k = 7$ yields the conditions
\begin{equation}\label{eq:BoundaryBSpline}
\begin{aligned}
& \text{ not}(m_1 + m_4 + m_2 = 7 \text{ and } m_4 \geq 3),\\
& \text{ not}(m_1 + m_4 + m_2 = 7 \text{ and } m_1 \geq 1 \text{ and } m_2 \geq 1 \text{ and } m_4 \neq 2 ),\\
& \text{ not}(m_2 + m_5 + m_3 = 7 \text{ and } m_5 \geq 3),\\
& \text{ not}(m_2 + m_5 + m_3 = 7 \text{ and } m_2 \geq 1 \text{ and } m_3 \geq 1 \text{ and } m_5 \neq 2 ),\\
& \text{ not}(m_1 + m_6 + m_3 = 7 \text{ and } m_6 \geq 3),\\
& \text{ not}(m_1 + m_6 + m_3 = 7 \text{ and } m_1 \geq 1 \text{ and } m_3 \geq 1 \text{ and } m_6 \neq 2 ).
\end{aligned}
\end{equation}

\begin{table}
\begin{tabular}{cccccccccc}
\toprule
$Q^a$ & $Q^b$ & $Q^c$ & $Q^d$ & $Q^e$ & $Q^f$ & $Q^g$ & $Q^h$ & $Q^i$ & $Q^j$\\
   \SimS{600101} & \SimS{500201} & \SimS{501200} & \SimS{410102} & \SimS{410201}
 & \SimS{320201} & \SimS{220211} & \SimS{422000} & \SimS{332000} & \SimS{412100}\\ \midrule
$Q^k$ & $Q^l$ & $Q^m$ & $Q^n$ & $Q^o$ & $Q^p$ & $Q^q$ & $Q^r$ & $Q^s$ & $Q^t$\\
   \SimS{322100} & \SimS{141110} & \SimS{132110} & \SimS{222110} & \SimS{221111}
 & \SimS{411200} & \SimS{321200} & \SimS{131210} & \SimS{221210} & \SimS{121211}\\ \bottomrule
\end{tabular}
\caption[]{The $C^3$ quintic simplex splines on $\PSB$, one representative for each $S_3$ equivalence class, that reduce to a B-spline on the boundary.}\label{tab:AllSimplexSplines532}
\end{table}

Adding the entries in the second column of \eqref{eq:InteriorSmoothness} and using \eqref{eq:WrongKnotLines2} gives $2m_4 + 2m_5 + 2m_6 \leq 9$, implying, by \eqref{eq:MultiplicitySum},
\begin{equation}\label{eq:interiorexteriormultiplicities}
m_1 + m_2 + m_3 \geq 4,\qquad m_4 + m_5 + m_6 \leq 4.
\end{equation}

\begin{theorem}\label{thm:AllowedSimplexSplines}
With one representative for each $S_3$ equivalence class, Table~\ref{tab:AllSimplexSplines532} is an exhaustive list of the $C^3$ quintic simplex splines on $\PSB$ that reduce to a B-spline on the boundary.
\end{theorem}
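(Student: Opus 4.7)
The plan is a finite combinatorial enumeration. Combining the preceding lemma with \eqref{eq:interiorexteriormultiplicities}, every candidate simplex spline corresponds to a sextuple $(m_1,\ldots,m_6)\in\ZZ_{\geq 0}^6$ with $m_7=\cdots=m_{10}=0$ and $m_1+\cdots+m_6=8$, satisfying the six pairwise constraints in \eqref{eq:InteriorSmoothness2} (whenever both summands are positive) together with the boundary-reduction conditions \eqref{eq:BoundaryBSpline}. The dihedral group $S_3$ acts diagonally on such sextuples by simultaneously permuting the index sets $\{1,2,3\}$ and $\{4,5,6\}$, since a rotation $\bfv_1\to\bfv_2\to\bfv_3$ cycles $\bfv_4\to\bfv_5\to\bfv_6$ correspondingly. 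It therefore suffices to exhibit one sextuple per $S_3$-orbit and check that the resulting list agrees with $Q^a,\ldots,Q^t$.

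I would stratify the enumeration by the interior weight $M := m_4+m_5+m_6 \in\{0,1,2,3,4\}$. For each $M$ I first list the $S_3$-orbits of inner triples $(m_4,m_5,m_6)$ compatible with the internal constraints $m_4+m_5,\, m_5+m_6,\, m_4+m_6\leq 3$; this is a very short list (for example, $M=4$ admits only the orbits of $(4,0,0)$ and $(2,1,1)$). For each representative inner triple I pair it with the outer triples $(m_1,m_2,m_3)$ of sum $8-M$ satisfying the three mixed constraints $m_1+m_5,\, m_2+m_6,\, m_3+m_4\leq 3$ and, on each boundary edge whose three edge multiplicities add to $7$, the conditions \eqref{eq:BoundaryBSpline}; on edges where this sum is strictly less than $7$ the restriction vanishes by \eqref{eq:restriction} and the B-spline condition is automatic. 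Collecting one representative per diagonal $S_3$-orbit of each surviving sextuple yields the list in Table~\ref{tab:AllSimplexSplines532}.

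The main obstacle is purely organizational. No individual case is difficult, but the diagonal $S_3$-action couples outer and inner triples in a way that makes it easy to overlook an orbit or to double-count orbits with nontrivial stabilizer (for instance those in which both triples have the form $(a,a,b)$ with $b$ in a symmetric position). To guard against this I would cross-check the hand enumeration with a short exhaustive loop in \texttt{Sage} over all sextuples with sum $8$, filtering by the interior, mixed, and boundary constraints and then partitioning into $S_3$-orbits, confirming that exactly the twenty representatives in the table remain.
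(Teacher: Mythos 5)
Your proposal is correct and is essentially the paper's own argument: both reduce the theorem to a finite enumeration of multiplicity sextuples $(m_1,\ldots,m_6)$ summing to $8$, subject to \eqref{eq:InteriorSmoothness2}, \eqref{eq:BoundaryBSpline} and \eqref{eq:interiorexteriormultiplicities}, taken up to the diagonal $S_3$-action, the paper merely organizing the cases by the support $[\bfK]$ (how many corners it contains) rather than by $m_4+m_5+m_6$. The one detail to add to your filter is that sextuples whose knots are collinear (e.g.\ all mass on a single edge, as with inner triple $(4,0,0)$ and $m_3=0$) must be discarded because $\area([\bfK])=0$ gives the zero function; the paper's case-by-support organization handles this automatically.
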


\begin{proof}
Recall from \eqref{eq:WrongKnotLines2} that $m_7 = m_8 = m_9 = m_{10} = 0$. We distinguish cases according to the support $[\bfK]$ of $Q[\bfK]$, up to a symmetry in $S_3$.

\emph{Case 0, no corner included, $[\bfK] = [\bfv_4, \bfv_5, \bfv_6]$}: By \eqref{eq:MultiplicitySum}, $m_4 + m_5 + m_6 = 8$, so that the sum of two of these  multiplicities will be at least $5$, contradicting~\eqref{eq:InteriorSmoothness2}. Therefore this case does not occur.

\emph{Case 1a, 1 corner included, $[\bfK] = [\bfv_1, \bfv_4, \bfv_6]$}: For a positive support $m_1, m_4, m_6\geq 1$, and since $m_4 + m_6 \leq 3$ by \eqref{eq:InteriorSmoothness2}, we obtain 
\[ [Q^a]_{S_3} = \left[\SimS{600101}\right]_{S_3},\ [Q^b]_{S_3} = \left[\SimS{500201}\right]_{S_3}.\]

\emph{Case 1b, 1 corner included, $[\bfK] = [\bfv_1, \bfv_4, \bfv_5, \bfv_6]$}: 
By \eqref{eq:MultiplicitySum} and \eqref{eq:interiorexteriormultiplicities} one has $m_1 = 8 - m_4 - m_5 - m_6 \geq 4$, contradicting $m_1 + m_5 \leq 3$ from \eqref{eq:InteriorSmoothness2}. Therefore this case does not occur.

\emph{Case 2a, 2 corners included, $[\bfK] = [\bfv_1, \bfv_2, \bfv_6]$}: If $m_4 \in \{0,1\}$, then $m_6 \geq 2$ by the second line in \eqref{eq:BoundaryBSpline}, and since $m_6 \leq 3 - m_2$ by \eqref{eq:InteriorSmoothness2}, it follows that $m_2 = 1$ and $m_6 = 2$. We obtain
\[ \left[Q^c\right]_{S_3} = \left[\SimS{510002}\right]_{S_3},\ [Q^d]_{S_3} = \left[\SimS{410102}\right]_{S_3}.\]
If $m_4 \geq 2$, then, since $m_4 + m_6 \leq 3$ by \eqref{eq:InteriorSmoothness2}, one has $m_4 = 2$ and $m_6 = 1$. Since $m_2\leq 3 - m_6 = 2$ by \eqref{eq:InteriorSmoothness2}, we obtain
\[ [Q^e]_{S_3} = \left[\SimS{410201}\right]_{S_3},\ [Q^f]_{S_3} = \left[\SimS{320201}\right]_{S_3}.\]

\emph{Case 2b, 2 corners included, $[\bfK] = [\bfv_1, \bfv_2, \bfv_5, \bfv_6]$}: Since $m_5 + m_6\leq 3$ by \eqref{eq:InteriorSmoothness2}, one has $1\leq m_5, m_6\leq 2$. Suppose $m_6 = 2$. Then $m_2 = m_5 = 1$ and $m_4 \leq 1$ by \eqref{eq:InteriorSmoothness2}, and $m_1 + m_5 = 8 - m_2 - m_4 - m_6 \geq 4$, contradicting \eqref{eq:InteriorSmoothness2}. We conclude that $m_6 = 1$ and similarly that $m_5 = 1$. Then $m_1 + m_2 + m_4 = 8 - m_5 - m_6 = 6$, and since \eqref{eq:InteriorSmoothness2} implies $m_1, m_2, m_4\leq 2$, one obtains
\[ [Q^g]_{S_3} = \left[\SimS{220211}\right]_{S_3}.\]

\emph{Case 3, 3 corners included, $[\bfK] = [\bfv_1, \bfv_2, \bfv_3]$}: We distinguish cases for $(m_4, m_5, m_6)$, with $m_4\geq m_5\geq m_6$, first by $m_4$, and then by $m_4 + m_5 + m_6$, which is at most 4 by~\eqref{eq:interiorexteriormultiplicities}.

\begin{itemize}
\item[(0,0,0)] One has $m_1, m_2, m_3\geq 2$ by \eqref{eq:BoundaryBSpline}, and we obtain
\[ [Q^h]_{S_3} = \left[\SimS{422000}\right]_{S_3},\ [Q^i]_{S_3} = \left[\SimS{332000}\right]_{S_3}.\]
\item[(1,0,0)] One has $2\leq m_3 \leq 3 - m_4$ by \eqref{eq:BoundaryBSpline} and \eqref{eq:InteriorSmoothness2}, yielding
\[ [Q^j]_{S_3} = \left[\SimS{412100}\right]_{S_3},\ [Q^k]_{S_3} = \left[\SimS{322100}\right]_{S_3}.\]
\item[(1,1,0)] One has $m_1, m_3 \leq 2$ by \eqref{eq:InteriorSmoothness2}, yielding
\[ [Q^l]_{S_3} = \left[\SimS{141110}\right]_{S_3},\ [Q^m]_{S_3} = \left[\SimS{132110}\right]_{S_3},\ [Q^n]_{S_3} = \left[\SimS{222110}\right]_{S_3}.\]
\item[(1,1,1)] One has $m_1, m_2, m_3 \leq 2$ by \eqref{eq:InteriorSmoothness2}, and we obtain
\[ [Q^o]_{S_3} = \left[\SimS{221111}\right]_{S_3}.\]
\item[(2,0,0)] One has $m_3 = 1$ by \eqref{eq:InteriorSmoothness2}, yielding
\[ [Q^p]_{S_3} = \left[\SimS{411200}\right]_{S_3},\ [Q^q]_{S_3} = \left[\SimS{321200}\right]_{S_3}.\]
\item[(2,1,0)] One has $m_3 = 1$ and $m_1 \leq 2$ by \eqref{eq:InteriorSmoothness2}, yielding
\[ [Q^r]_{S_3} = \left[\SimS{131210}\right]_{S_3},\ [Q^s]_{S_3} = \left[\SimS{221210}\right]_{S_3}.\]
\item[(2,1,1)] One has $m_3 = 1$ and $m_1, m_2\leq 2$ by \eqref{eq:InteriorSmoothness2}, yielding
\[ [Q^t]_{S_3} = \left[\SimS{121211}\right]_{S_3}.\hfill \qedhere \]
\end{itemize}
\end{proof}

\section{Simplex spline bases for $\SSS^3_5$}\label{sec:SimplexSplineBases}
\noindent Let $\Lambda$ as in \eqref{eq:Lambda} be a basis of the dual space of $\SSS^3_5(\PSB)$. In this section we describe a recipe for determining the $S_3$-invariant simplex spline bases that reduce to a B-spline basis on the boundary, having a positive partition of unity and a Marsden identity with only real linear factors.

\subsection{Potential bases}
\noindent Of the $C^3$ quintic simplex splines in Table~\ref{tab:AllSimplexSplines532}, only
\begin{center}
\begin{tabular}{ccccccc}
$Q^a$ & $Q^b$ & $Q^c$ & $Q^e$ & $Q^f$ & $Q^p$ & $Q^q$\\
\SimS{600101}, & \SimS{500201}, & \SimS{501200}, & \SimS{410201}, & \SimS{320201}, & \SimS{411200}, & \SimS{321200}
\end{tabular}
\end{center}
are nonzero on $[\bfv_1, \bfv_2]$. Any $S_3$-invariant basis reducing to a B-spline basis on the boundary should therefore contain the $S_3$ equivalence class of one of the eight possible combinations in the Cartesian product
\[\left\{\SimS{600101}\right\} \times \left\{\SimS{500201}, \SimS{501200}\right\}\times \left\{\SimS{410201},\SimS{411200}\right\}\times\left\{\SimS{320201},\SimS{321200}\right\}. \]
This determines $3 + 6 + 6 + 6 = 21$ of the $39$ basis elements.

Of the 13 remaining $S_3$ equivalence classes in Table~\ref{tab:AllSimplexSplines532} of simplex splines that are zero on $[\bfv_1, \bfv_2]$, there are 6 of size 3,
\begin{center}
\begin{tabular}{cccccc}
\ $Q^g$ & $Q^h$ & $Q^i$ & $Q^l$ & $Q^n$ & $Q^o\ \ \ $\quad \\
$\Big[\SimS{220211}$, & $\SimS{422000}$, & $\SimS{332000}$, &
$\SimS{141110}$, & $\SimS{222110}$, & $\SimS{221111}\Big]_{S_3}$
\end{tabular}
\end{center}
and 7 of size 6,
\begin{center}
\begin{tabular}{ccccccc}
\ $Q^d$ & $Q^j$ & $Q^k$ & $Q^m$ & $Q^r$ & $Q^s$ & $Q^t\ \ \ \,$\\
$\Big[\SimS{410102}$, & $\SimS{412100}$, & $\SimS{322100}$, & $\SimS{132110}$, &
     $\SimS{131210}$, & $\SimS{221210}$, & $\SimS{121211}\Big]_{S_3}$.
\end{tabular}
\end{center}
For each of the above 8 choices of 21 simplex splines that are nontrivial on the boundary $\partial\PS$, we complete the basis by adding $S_3$ equivalence classes with together 18 elements, resulting in
\[ 8\left[{7 \choose 3} + {7\choose 2}{6\choose 2} + {7\choose 1}{6\choose 4} + {6 \choose 6}\right] = 3648 \]
potential $S_3$-invariant simplex spline bases $\BBB$ of $\SSS^3_5$ that reduce to a B-spline basis on the boundary. One selects the linearly independent sets $\BBB$ by checking that the collocation-like matrix $\{\lambda(Q)\}_{Q\in \BBB, \lambda \in\Lambda}$ has full rank.

\subsection{Positive partition of unity}
\noindent Let $\BBB = \{Q_1, \ldots, Q_{39}\}$ be an ordered simplex spline basis of $\SSS^3_5(\PS)$ that reduces to a B-spline basis on the boundary. We desire that $\BBB$ has a \emph{positive partition of unity}, i.e., that there exist weights $w_1, \ldots, w_{39} > 0$ for which
\begin{equation}\label{eq:PartitionOfUnity}
\sum_{i = 1}^{39} w_i Q_i(\bfx) = 1
\end{equation}
holds identically. Applying the functionals in $\Lambda = \{\lambda_1, \ldots, \lambda_{39}\}$ yields
\[ \sum_{i = 1}^{39} w_i \lambda_j(Q_i) = \lambda_j(1),\qquad j = 1,\ldots, 39, \]
which has a unique solution $(w_1,\ldots,w_{39})\in \RR^{39}$ by the linear independence of $\BBB$ and $\Lambda$. One then checks whether the weights $w_i$ are all positive.

\subsection{Marsden identity}
More generally, we would like $\BBB$ to satisfy a \emph{Marsden identity}
\begin{equation}\label{eq:MarsdenIdentity}
(1 + \bfx^\mT \bfy)^5 = \sum_{i = 1}^{39} w_i Q_i(\bfx) \psi_i(\bfy),\qquad \bfx\in \PS,\ \bfy \in \RR^2,
\end{equation}
for certain \emph{dual polynomials} $\psi_i$ and \emph{dual points} $\bfp_{i,r}^*$ of the form
\[ \psi_i (\bfy) := \prod_{r = 1}^5 (1 + \bfp_{i,r}^{*\mT} \bfy),
\qquad \bfp_{i,r}^*\in \RR^2, \quad r = 1, \ldots, 5, \quad i=1,\ldots,39. \]
In particular one recovers the partition of unity by setting $\bfy = 0$. Similarly one can generate all other polynomials of degree at most $5$. For instance, differentiating \eqref{eq:MarsdenIdentity} with respect to $\bfy$ and evaluating at $\bfy = 0$ yields
\begin{equation}\label{eq:DomainPoints}
\bfx = \sum_{i = 1}^{39} \bfxi_i w_i Q_i(\bfx),\qquad \bfxi_i := \left.\frac{1}{5} \nabla \psi_i\right|_{\bfy = 0} = \frac{\bfp_{i,1}^* + \cdots + \bfp_{i,5}^*}{5},
\end{equation}
where $\bfxi_i$ is the \emph{domain point} associated to $Q_i$.

While the Marsden identity \eqref{eq:MarsdenIdentity} is the form commonly encountered in the literature \cite{Goodman.Lee81, Hollig82}, we instead present a barycentric form that is independent of the vertices of the macrotriangle.

\begin{theorem}[Barycentric Marsden identity]
Let $\beta_j = \beta_j(\bfx), j = 1,2,3$, be the barycentric coordinates of $\bfx\in \RR^2$ with respect to $\PS = [\bfv_1,\bfv_2,\bfv_3]$. Then \eqref{eq:MarsdenIdentity} is equivalent to
\begin{equation}\label{eq:BarycentricMarsdenIdentity}
(\beta_1 c_1 + \beta_2 c_2 + \beta_3 c_3)^5 = \sum_{i = 1}^{39} w_i Q_i(\beta_1\bfv_1 + \beta_2\bfv_2 + \beta_3\bfv_3) \Psi_i(c_1,c_2,c_3),
\end{equation}
where $\bfx\in \PS$, $c_1,c_2,c_3\in \RR$, and, for $i=1,\ldots,39$,
\[ \Psi_i(c_1, c_2, c_3)
 := \prod_{r=1}^5 \big(\beta_1(\bfp_{i,r}^*)c_1 + \beta_2(\bfp_{i,r}^*)c_2 + \beta_3(\bfp_{i,r}^*)c_3\big). \]
\end{theorem}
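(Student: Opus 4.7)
The plan is to prove the two implications separately. The reverse direction $\eqref{eq:BarycentricMarsdenIdentity}\Rightarrow\eqref{eq:MarsdenIdentity}$ is a direct substitution; the forward direction will yield \eqref{eq:BarycentricMarsdenIdentity} only on a two-dimensional slice of $\RR^3$, and the one non-routine step is extending from this slice to all of $\RR^3$ via a homogeneity argument. First I would record the substitutive identity
\[
1 + \bfp^\mT\bfy = \sum_{k=1}^3 \beta_k(\bfp)\bigl(1 + \bfv_k^\mT\bfy\bigr), \qquad \bfp,\bfy\in\RR^2,
\]
which follows at once from $\bfp = \sum_k \beta_k(\bfp)\bfv_k$ and $\sum_k\beta_k(\bfp) = 1$.

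Applying this identity to $\bfp = \bfx$ and to each dual point $\bfp_{i,r}^*$, and setting $c_k := 1+\bfv_k^\mT\bfy$, turns each factor of \eqref{eq:MarsdenIdentity} into the corresponding factor of \eqref{eq:BarycentricMarsdenIdentity}. This yields $\eqref{eq:BarycentricMarsdenIdentity}\Rightarrow\eqref{eq:MarsdenIdentity}$ immediately by specialising $c_k$ as above. Conversely, the same substitution only establishes \eqref{eq:BarycentricMarsdenIdentity} for $(c_1,c_2,c_3)$ in the image $H\subset\RR^3$ of the affine map $\bfy\mapsto(1+\bfv_1^\mT\bfy,\,1+\bfv_2^\mT\bfy,\,1+\bfv_3^\mT\bfy)$, which is a two-dimensional affine plane containing $(1,1,1)$.

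To extend to arbitrary $(c_1,c_2,c_3)\in\RR^3$, I would observe that both sides of \eqref{eq:BarycentricMarsdenIdentity} are polynomials in $(c_1,c_2,c_3)$ homogeneous of degree~$5$, so their difference $D$ vanishes on the full cone $\bigcup_{\lambda\in\RR}\lambda H$. The hard part, and indeed the only real content, is verifying that this cone equals $\RR^3$. This reduces to showing $0\notin H$: otherwise some $\bfy$ would satisfy $\bfv_k^\mT\bfy = -1$ for $k = 1,2,3$, forcing the vertices of the nondegenerate triangle $\PS$ to lie on the affine line $\{\bfx\in\RR^2 : \bfx^\mT\bfy = -1\}$, a contradiction. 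Writing $H = (1,1,1) + V$ for the $2$-dimensional linear subspace $V\subset\RR^3$ of directions of $H$, it then follows that $(1,1,1)\notin V$, so $\bigcup_{\lambda\in\RR}\lambda H = \RR(1,1,1) + V = \RR^3$, and hence $D\equiv 0$. Everything else is routine bookkeeping with barycentric coordinates.
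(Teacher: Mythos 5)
Your proof is correct, and its core is the same substitution the paper uses: setting $c_k = 1+\bfv_k^\mT\bfy$ and invoking the identity $1+\bfp^\mT\bfy=\sum_k\beta_k(\bfp)\,(1+\bfv_k^\mT\bfy)$ to match each linear factor of $\psi_i$ with the corresponding factor of $\Psi_i$. Where the two arguments differ is in passing between $\bfy\in\RR^2$ and arbitrary $(c_1,c_2,c_3)\in\RR^3$. The paper runs the correspondence in the other direction: it takes the dual basis $\bfY_1,\bfY_2,\bfY_3$ of $\bfX_j=(1,\bfv_j)^\mT$ and sets $\bfY:=c_1\bfY_1+c_2\bfY_2+c_3\bfY_3=(1,\bfy)^\mT$; read literally, the first component of this $\bfY$ equals $1$ only for $(c_1,c_2,c_3)$ on a certain affine plane, so the paper too is in effect working on a two-dimensional slice and tacitly relying on the degree-$5$ homogeneity of both sides to cover the rest of $\RR^3$ --- precisely the step you make explicit. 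So your treatment of the forward implication is, if anything, the more complete one, at the cost of having to verify $0\notin H$ (which you do correctly from the nondegeneracy of $\PS$). One cosmetic slip: $\bigcup_{\lambda\in\RR}\lambda H$ is not literally $\RR(1,1,1)+V=\RR^3$, since the fibre $\lambda=0$ contributes only the origin; the cone is $(\RR^3\setminus V)\cup\{0\}$. As this set is dense and $D$ is a polynomial, the conclusion $D\equiv 0$ is unaffected, but you should phrase it as ``$D$ vanishes on a dense subset of $\RR^3$, hence identically.''
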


\begin{proof}
Let $\bfX_j := (1,\bfv_j)^\mT\in \RR^3$, $j=1,2,3$. Since $\{\bfX_1, \bfX_2, \bfX_3\}$ is linearly independent, there are $\bfY_i\in \RR^3$, $i=1,2,3$, such that $\bfX_j^\mT\bfY_i = \delta_{ij}$. Given $\bfx\in \PS$ and $c_1,c_2,c_3\in \RR$ we define $\bfX, \bfY\in \RR^3$ by $\bfX := (1,\bfx)^\mT$ and $\bfY = (1, \bfy)^\mT := c_1\bfY_1 + c_2\bfY_2 + c_3\bfY_3$. Since $\bfX = \beta_1(\bfx)\bfX_1 + \beta_2(\bfx)\bfX_2 + \beta_3(\bfx)\bfX_3$,
\[ 1 + \bfx^\mT \bfy = \bfX^\mT\bfY = \beta_1(\bfx)c_1 + \beta_2(\bfx)c_2 + \beta_3(\bfx)c_3, \]
and in particular with $\bfx = \bfp_{i,r}^*$ and $\bfP_{i,r}^* := (1,\bfp_{i,r}^*)^\mT$, 
\[ 1 + \bfp_{i,r}^{*\mT} \bfy = \bfP_{i,r}^{*\mT}\bfY = \beta_1(\bfp_{i,r}^*)c_1 + \beta_2(\bfp_{i,r}^*)c_2 + \beta_3(\bfp_{i,r}^*)c_3. \]
It follows that \eqref{eq:BarycentricMarsdenIdentity} is equivalent to \eqref{eq:MarsdenIdentity}.
\end{proof}

For a compact representation of the dual polynomials, we introduce the shorthands (compare Figure \ref{fig:PS12-Labels1})
\begin{equation}\label{eq:ShorthandDual}
\begin{aligned}
& c_4 := \frac{c_1 + c_2}{2},\quad c_5 := \frac{c_2 + c_3}{2},\quad c_6 := \frac{c_1 + c_3}{2},\\
& c_7 := \frac{c_4 + c_6}{2},\quad c_8 := \frac{c_4 + c_5}{2},\quad c_9 := \frac{c_5 + c_6}{2},\quad
c_{10} := \frac{c_1 + c_2 + c_3}{3}.
\end{aligned}
\end{equation}

\begin{example}
The barycentric Marsden identity for the quadratic S-basis in \cite[Theorem 3.1]{Cohen.Lyche.Riesenfeld13} (cf. Example \ref{ex:S-basis}) is
\begin{align*}
& (c_1\beta_1 + c_2\beta_2 + c_3\beta_3)^2 = \left(c_1 \SimS{211000} + c_2\SimS{121000} + c_3\SimS{112000}\right)^2 = \\
& \qquad + \frac14 c_1^2 \SimS{300101} + \frac34 c_4c_{10}\SimS{110111} + \frac12 c_1c_4 \SimS{210101}+ \frac12 c_2c_4 \SimS{120110}\\
& \qquad + \frac14 c_2^2 \SimS{030110} + \frac34 c_5c_{10}\SimS{011111} + \frac12 c_2c_5 \SimS{021110} + \frac12 c_3c_5 \SimS{012011}\\
& \qquad + \frac14 c_3^2 \SimS{003011} + \frac34 c_6c_{10}\SimS{101111} + \frac12 c_3c_6 \SimS{102011} + \frac12 c_1c_6 \SimS{201101},
\end{align*}
where the weigths follow from \eqref{eq:S-splines}.
\end{example}

For every basis $\BBB = \{Q_1,\ldots, Q_{39}\}$ with a positive partition of unity \eqref{eq:PartitionOfUnity}, we apply the functionals in $\Lambda$ to \eqref{eq:MarsdenIdentity} with respect to $\bfx$, which gives
\begin{equation*}
\begin{pmatrix}
\lambda_1   (Q_1) & \cdots & \lambda_1   (Q_{39})\\
\vdots            & \ddots & \vdots\\
\lambda_{39}(Q_1) & \cdots & \lambda_{39}(Q_{39})
\end{pmatrix}
\begin{pmatrix} w_1\psi_1(\bfy)\\ \vdots\\ w_{39}\psi_{39}(\bfy) \end{pmatrix}
=
\begin{pmatrix}
\lambda_1\big((1 + \bfx^\mT \bfy)^5\big)\\
\vdots\\
\lambda_{39}\big((1 + \bfx^\mT \bfy)^5\big)
\end{pmatrix}.
\end{equation*}
This system has a unique solution in the module $\RR[\bfy]^{39}$, with each component $w_i \psi_i(\bfy)$ a polynomial of degree at most 5 by Cramer's rule. The basis $\BBB$ has a Marsden identity \eqref{eq:MarsdenIdentity} if and only if these polynomials split into real linear factors.

\section{Main results}\label{sec:Results}
\noindent Some of the remaining computations are too large to carry out by hand. We have therefore implemented the above computations in {\tt Sage}~\cite{Sage}. From the website \cite{WebsiteGeorg} of the second author, the resulting worksheet can be downloaded and tried out online in {\tt SageMathCloud}.

\subsection{Six bases for $\SSS^3_5$}
Checking linear independence for each of the 3648 potential bases, we discover that there are 1024 $S_3$-invariant simplex spline bases that reduce to a B-spline basis on the boundary. There are 243 such bases with a nonnegative partition of unity, of which there are 47 bases with a positive partition of unity. Of these there are 9 with all domain points inside the macrotriangle, of which there are 7 with precisely 8 domain points on each boundary edge. Of these there are 6 bases $\BBB_a, \BBB_b, \BBB_c, \BBB_d, \BBB_e, \BBB_f$ for which each dual polynomial only has real linear factors. These bases, together with their weights $w_\star$, dual polynomials $\Psi_\star$, and domain points $\bfxi_\star$, are listed in Table \ref{tab:DualPolynomials}. For instance, the highlighted rows in the table yield the set
\begin{equation}
\left[ \frac14 \SimS{600101}, \frac14 \SimS{500201}, \frac12 \SimS{410201}, \frac12 \SimS{320201}, \frac34 \SimS{220211}, \SimS{141110}, \frac12 \SimS{131210}, \frac34 \SimS{121211} \right]_{S_3}. \tag{$\BBB_c$}
\end{equation}
We summarize these results in the following theorem.

\begin{table}
\begin{adjustwidth}{-1.5in}{-1.5in}
\begin{center}
\begin{tabular}{cccccccccccccc}
\toprule
 & & $Q^a$ & $Q^b$ & $Q^e$ & $Q^f$ & $Q^g$ & $Q^l$\\
 & & \SimS{600101} & \SimS{500201} & \SimS{410201} & \SimS{320201} & \SimS{220211} & \SimS{141110} \\ \midrule
$\BBB_a$ & $w_\star\Psi_\star$ & $\frac{1}{4}c_1^5$ & $\frac{1}{4}c_1^4c_4$ & $\frac{1}{2}c_1^3c_4^2$ & $\frac{1}{2}c_1^2c_2c_4^2$ & 0 & $c_2^3c_4c_5$ \\
 & $\bfxi_\star$ & $\left(1,0,0\right)$ & $\left(\frac{9}{10}, \frac{1}{10}, 0\right)$ & $\left(\frac45, \frac15, 0\right)$ & $\left(\frac35, \frac25, 0\right)$ & $\times$ & $\left(\frac{1}{10},\frac45, \frac{1}{10}\right)$ \\  \midrule
$\BBB_b$ & $w_\star\Psi_\star$ & $\frac{1}{4}c_1^5$ & $\frac{1}{4}c_1^4c_4$ & $\frac{1}{2}c_1^3c_4^2$ & $\frac{1}{2}c_1^2c_2c_4^2$ & 0 & $c_2^3c_4c_5$ \\
 & $\bfxi_\star$ & $\left(1,0,0\right)$ & $\left(\frac{9}{10}, \frac{1}{10}, 0\right)$ & $\left(\frac45, \frac15, 0\right)$ & $\left(\frac35, \frac25, 0\right)$ & $\times$ & $\left(\frac{1}{10},\frac45, \frac{1}{10}\right)$ \\  \midrule
\rowcolor{LightGray} $\BBB_c$ & $w_\star\Psi_\star$ & $\frac{1}{4}c_1^5$ & $\frac{1}{4}c_1^4c_4$ & $\frac{1}{2}c_1^3c_4^2$ & $\frac{1}{2}c_1^2c_2c_4^2$ & $\frac{3}{4}c_1c_2c_4^2c_{10}$ & $c_2^3c_4c_5$ \\
\rowcolor{LightGray} & $\bfxi_\star$ & $\left(1,0,0\right)$ & $\left(\frac{9}{10}, \frac{1}{10}, 0\right)$ & $\left(\frac45, \frac15, 0\right)$& $\left(\frac35, \frac25, 0\right)$ & $\left(\frac{7}{15}, \frac{7}{15}, \frac{1}{15}\right)$ & $\left(\frac{1}{10},\frac45, \frac{1}{10}\right)$ \\ \midrule
$\BBB_d$ & $w_\star\Psi_\star$ & $\frac{1}{4}c_1^5$ & $\frac{1}{4}c_1^4c_4$ & $\frac{1}{2}c_1^3c_4^2$ & 0 & 0 & $c_2^3c_4c_5$ \\
 & $\bfxi_\star$ & $\left(1,0,0\right)$ & $\left(\frac{9}{10}, \frac{1}{10}, 0\right)$ & $\left(\frac45, \frac15, 0\right)$ & $\times$ & $\times$ & $\left(\frac{1}{10},\frac45, \frac{1}{10}\right)$ \\  \midrule
$\BBB_e$ & $w_\star\Psi_\star$ & $\frac{1}{4}c_1^5$ & $\frac{1}{4}c_1^4c_4$ & $\frac{1}{2}c_1^3c_4^2$ & 0 & 0 & $c_2^3c_4c_5$ \\
 & $\bfxi_\star$ & $\left(1,0,0\right)$ & $\left(\frac{9}{10}, \frac{1}{10}, 0\right)$ & $\left(\frac45, \frac15, 0\right)$ & $\times$ & $\times$ & $\left(\frac{1}{10},\frac45, \frac{1}{10}\right)$ \\  \midrule
$\BBB_f$ & $w_\star\Psi_\star$ & $\frac{1}{4}c_1^5$ & $\frac{1}{4}c_1^4c_4$ & $\frac{1}{2}c_1^3c_4^2$ & 0 & $\frac{1}{4}c_1c_2c_3c_4^2$ & $c_2^3c_4c_5$ \\
 & $\bfxi_\star$ & $\left(1,0,0\right)$ & $\left(\frac{9}{10}, \frac{1}{10}, 0\right)$ & $\left(\frac45, \frac15, 0\right)$ & $\times$ & $\left(\frac25,\frac25,\frac15\right)$ & $\left(\frac{1}{10},\frac45, \frac{1}{10}\right)$ \\ \bottomrule
 & & $Q^n$ & $Q^o$ & $Q^q$ & $Q^r$ & $Q^s$ & $Q^t$\\
 & & \SimS{222110} & \SimS{221111} & \SimS{321200} & \SimS{131210} & \SimS{221210} & \SimS{121211} \\ \midrule
$\BBB_a$ & $w_\star\Psi_\star$ & $\frac{3}{4}c_1c_2^2c_3c_{10}$ & 0 & 0 & $\frac{1}{2}c_1c_2^2c_4c_5$ & $\frac{3}{4}c_1c_2^2c_4c_{10}$ & 0 \\
 & $\bfxi_\star$ & $\left(\frac{4}{15}, \frac{7}{15}, \frac{4}{15}\right)$ & $\times$ & $\times$ & $\left(\frac{3}{10}, \frac{3}{5}, \frac{1}{10}\right)$ & $\left(\frac{11}{30}, \frac{17}{30}, \frac{1}{15}\right)$ & $\times$ \\  \midrule
$\BBB_b$ & $w_\star\Psi_\star$ & 0 & $\frac{3}{4}c_1c_2c_3c_4c_{10}$ & 0 & $\frac{1}{2}c_1c_2^2c_4c_5$ & $\frac{3}{4}c_1c_2^2c_4c_{10}$ & 0 \\
 & $\bfxi_\star$ & $\times$ & $\left(\frac{11}{30}, \frac{11}{30}, \frac{4}{15}\right)$ & $\times$ & $\left(\frac{3}{10}, \frac{3}{5}, \frac{1}{10}\right)$ & $\left(\frac{11}{30}, \frac{17}{30}, \frac{1}{15}\right)$ & $\times$  \\  \midrule
\rowcolor{LightGray} $\BBB_c$ & $w_\star\Psi_\star$ & 0 & 0 & 0 & $\frac{1}{2}c_1c_2^2c_4c_5$ & 0 & $\frac{3}{4}c_1c_2c_4c_5c_{10}$ \\
\rowcolor{LightGray} & $\bfxi_\star$ & $\times$ & $\times$ & $\times$ & $\left(\frac{3}{10}, \frac{3}{5}, \frac{1}{10}\right)$ & $\times$ & $\left(\frac{11}{30}, \frac{7}{15}, \frac{1}{6}\right)$ \\ \midrule
$\BBB_d$ & $w_\star\Psi_\star$ & $\frac{3}{4}c_1c_2^2c_3c_{10}$ & 0 & $c_1^2c_2c_4^2$ & $\frac{1}{2}c_1c_2^2c_4c_5$ & $\frac{1}{4}c_1c_2^2c_3c_4$ & 0 \\
 & $\bfxi_\star$ & $\left(\frac{4}{15}, \frac{7}{15}, \frac{4}{15}\right)$ & $\times$ & $\left(\frac{3}{5}, \frac{2}{5}, 0\right)$ & $\left(\frac{3}{10}, \frac{3}{5}, \frac{1}{10}\right)$ & $\left(\frac{3}{10}, \frac{1}{2}, \frac{1}{5}\right)$ & $\times$ \\  \midrule
$\BBB_e$ & $w_\star\Psi_\star$ & 0 & $\frac{3}{4}c_1c_2c_3c_4c_{10}$ & $c_1^2c_2c_4^2$ & $\frac{1}{2}c_1c_2^2c_4c_5$ & $\frac{1}{4}c_1c_2^2c_3c_4$ & 0 \\
 & $\bfxi_\star$ & $\times$ & $\left(\frac{11}{30}, \frac{11}{30}, \frac{4}{15}\right)$ & $\left(\frac{3}{5}, \frac{2}{5}, 0\right)$ & $\left(\frac{3}{10}, \frac{3}{5}, \frac{1}{10}\right)$ & $\left(\frac{3}{10}, \frac{1}{2}, \frac{1}{5}\right)$ & $\times$ \\  \midrule
$\BBB_f$ & $w_\star\Psi_\star$ & 0 & 0 & $c_1^2c_2c_4^2$ & $\frac{1}{2}c_1c_2^2c_4c_5$ & 0 & $\frac{1}{2}c_1c_2c_3c_4c_8$ \\
 & $\bfxi_\star$ & $\times$ & $\times$ & $\left(\frac{3}{5}, \frac{2}{5}, 0\right)$ & $\left(\frac{3}{10}, \frac{3}{5}, \frac{1}{10}\right)$ & $\times$ & $\left(\frac{7}{20}, \frac{2}{5}, \frac{1}{4}\right)$ \\
 \bottomrule
\end{tabular}
\end{center}
\end{adjustwidth}
\bigskip
\caption{For the six bases $\BBB_a,\ldots,\BBB_f$ and for each contained representative $Q^\star$ in Table \ref{tab:AllSimplexSplines532}, the table lists the weight $w_\star$ times the dual polynomial $\Psi_\star$, using the shorthands in~\eqref{eq:ShorthandDual}, and the barycentric coordinates of the domain point $\bfxi_\star$.}\label{tab:DualPolynomials}
\end{table}

\begin{figure}
\begin{center}
\subfloat[]{\includegraphics[scale=0.66]{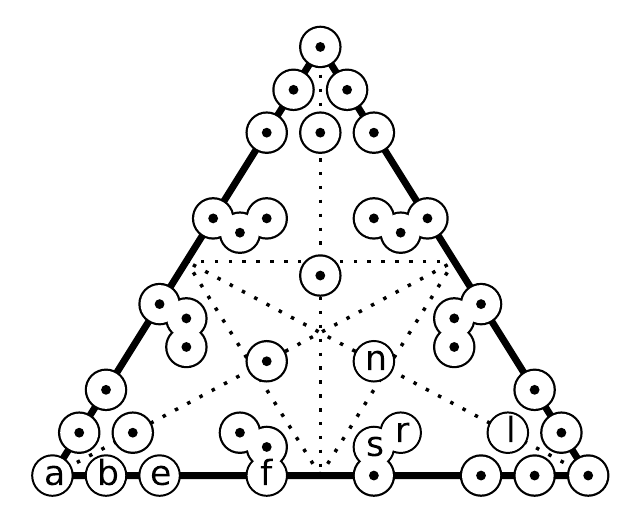}\label{fig:Marsden317}}
\subfloat[]{\includegraphics[scale=0.66]{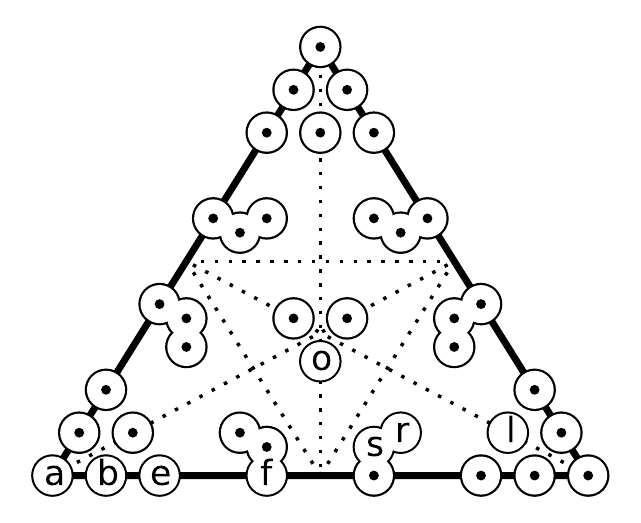}\label{fig:Marsden318}}
\subfloat[]{\includegraphics[scale=0.66]{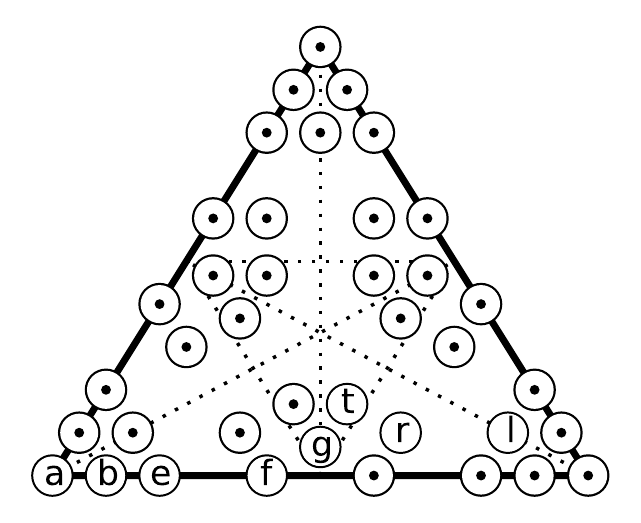}\label{fig:Marsden322}}\\
\subfloat[]{\includegraphics[scale=0.66]{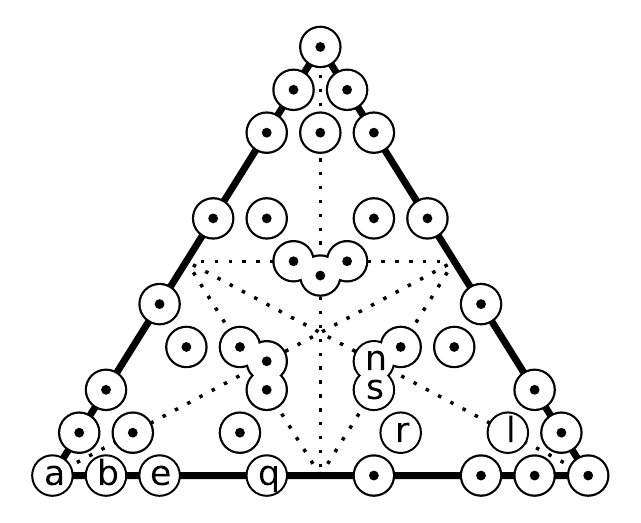}\label{fig:Marsden773}}
\subfloat[]{\includegraphics[scale=0.66]{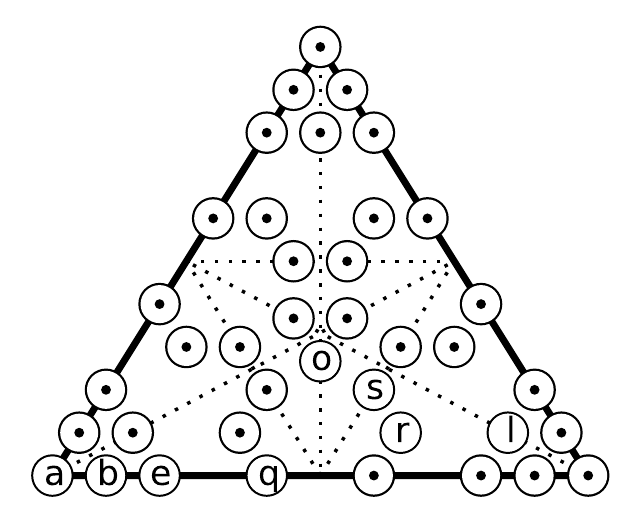}\label{fig:Marsden774}}
\subfloat[]{\includegraphics[scale=0.66]{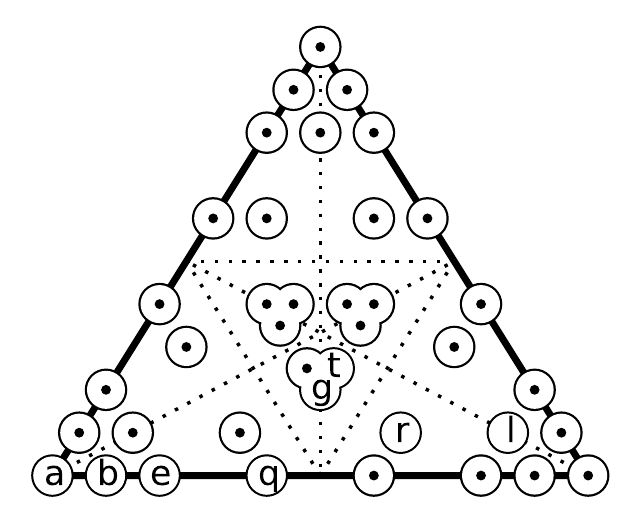}\label{fig:Marsden778}}\\
\end{center}
\caption{For the six bases in Table~\ref{tab:DualPolynomials}, the figure shows the domain points $\odot$ and, for each $S_3$ equivalence class, the label of a representative in Table \ref{tab:AllSimplexSplines532}. }\label{fig:MarsdenDomain}
\end{figure}

\begin{theorem}
The sets $\BBB = \BBB_a, \BBB_b, \BBB_c, \BBB_d, \BBB_e, \BBB_f$ are the only sets satisfying:
\begin{enumerate}
\item $\BBB$ is a basis of $\SSS^3_5(\PSB)$ consisting of simplex splines.
\item $\BBB$ is $S_3$-invariant.
\item $\BBB$ reduces to a B-spline basis on the boundary.
\item $\BBB$ has a positive partition of unity and a Marsden identity \eqref{eq:BarycentricMarsdenIdentity}, for which the dual polynomials have only real linear factors; see Table~\ref{tab:DualPolynomials}.
\item $\BBB$ has all its domain points inside the macrotriangle $\PS$, with precisely 8 domain points on each edge of $\PS$.
\end{enumerate}
\end{theorem}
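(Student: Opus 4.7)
The plan is to turn conditions (1)--(5) into a sequence of decidable filters on an explicit finite list of candidate sets, and to check by exhaustive symbolic computation that precisely six candidates survive. The enumeration is made finite by Theorem \ref{thm:AllowedSimplexSplines}: every element of a candidate basis must come from the twenty $S_3$ equivalence classes listed in Table \ref{tab:AllSimplexSplines532}. Condition (3) together with the dimension count forces $21$ of the $39$ basis elements to lie in the Cartesian product described at the start of Section \ref{sec:SimplexSplineBases}, and the remaining $18$ elements must be chosen from the $13$ other $S_3$ classes so that the total cardinality is $39$. This produces the $3648$ candidate $S_3$-invariant sets already counted there.

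The filters are then applied in order. For condition (1), for each candidate $\BBB = \{Q_1,\ldots,Q_{39}\}$ I would form the $39\times 39$ matrix $(\lambda_j(Q_i))$ with $\Lambda = \{\lambda_1,\ldots,\lambda_{39}\}$ as in \eqref{eq:Lambda}; since $\Lambda$ is a dual basis of $\SSS^3_5(\PSB)$, $\BBB$ is a basis iff this matrix is nonsingular. For each basis that survives, applying $\Lambda$ to \eqref{eq:PartitionOfUnity} yields a $39\times 39$ linear system with unique solution $(w_1,\ldots,w_{39})$, and I would keep only those bases with all $w_i>0$ (first the nonnegative ones, then the strictly positive ones). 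For the Marsden identity, I would apply $\Lambda$ in the variable $\bfx$ to \eqref{eq:MarsdenIdentity}; Cramer's rule gives each $w_i\psi_i(\bfy)$ as a polynomial of total degree at most $5$, and the condition is that it splits into five real linear factors, which I would express and test in the barycentric form \eqref{eq:BarycentricMarsdenIdentity}. Finally, reading off the domain points $\bfxi_i$ via \eqref{eq:DomainPoints}, I would enforce the two geometric constraints in (5). After each filter the surviving counts drop as $3648 \to 1024 \to 243 \to 47 \to 9 \to 7 \to 6$, with the six remainders being exactly $\BBB_a,\ldots,\BBB_f$.

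The main obstacle is that no conceptual argument appears to collapse the $3648$ candidates a priori, so the proof is essentially an exhaustive exact computation. To keep it trustworthy I would carry every step out over $\QQ$ in a computer algebra system, mirroring the \texttt{Sage} implementation available at \cite{WebsiteGeorg}: integer-rank computations settle (1), rational linear solves settle (2)--(3) for partition of unity, and factorization of the homogeneous quintic dual polynomials in the three variables $c_1,c_2,c_3$ (which in all surviving cases have coefficients in $\QQ$) settles the real-linear-factor requirement. The most delicate ingredient is the last one, but using the shorthands \eqref{eq:ShorthandDual} together with a search for linear factors of the form $\beta_1(\bfp^*)c_1+\beta_2(\bfp^*)c_2+\beta_3(\bfp^*)c_3$ with $\bfp^*$ running over the candidate domain points makes it effective and exact.
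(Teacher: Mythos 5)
Your proposal is correct and follows essentially the same route as the paper: enumerate the $3648$ candidates from Theorem~\ref{thm:AllowedSimplexSplines} and the boundary analysis, then filter by linear independence via the collocation matrix with $\Lambda$, positivity of the partition-of-unity weights, the domain-point constraints, and splitting of the dual polynomials into real linear factors, all carried out exactly over $\QQ$ in {\tt Sage}. The only cosmetic discrepancy is that your stated filter order (Marsden factorization before the domain-point tests) does not match the intermediate counts $3648 \to 1024 \to 243 \to 47 \to 9 \to 7 \to 6$ you quote, which correspond to the paper's order; since the final set is the intersection of all conditions, this does not affect the result.
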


\begin{remark}
Table \ref{tab:DualPolynomials} shows that, for the simplex splines
\[ \left[Q^g, Q^s, Q^t\right]_{S_3} = \left[ \SimS{220211}, \SimS{221210}, \SimS{121211} \right]_{S_3}, \]
the weights and dual polynomials depend on the entire basis, and they cannot be determined directly from the corresponding knot multiset.
\end{remark}

\subsubsection*{Dual points and domain points}
One immediately reads off the dual points from the dual polynomials in Table \ref{tab:DualPolynomials}, simply by replacing `$c$' by `$\bfv$'. For instance, in each basis the simplex spline
\[ Q^l = \SimS{141110} \]
has dual polynomial $\Psi^l = c_2^3 c_4 c_5$, and therefore dual points $\{\bfv_2^3\,\bfv_4\,\bfv_5\}$. By \eqref{eq:DomainPoints} the corresponding domain point is $(3\bfv_2 + \bfv_4 + \bfv_5)/5 = \frac{1}{10}\bfv_1 + \frac{4}{5}\bfv_2 + \frac{1}{10}\bfv_3$. Thus one obtains for each basis all domain points, which are listed in Table~\ref{tab:DualPolynomials} and shown in Figure \ref{fig:MarsdenDomain}. The domain points of the basis $\BBB_c$ are connected to form the domain mesh in Figure \ref{fig:domain_mesh_Bc}. To preserve the symmetry of $\PSB$, the domain points are forced to form a hybrid mesh with triangles, quadrilaterals, and a hexagon in the center.

\subsubsection*{Polynomial reproduction}
Following \cite{Knuth94}, we define for any nonnegative integers $i_1,i_2,i_3$ the ``coefficient of'' operator
\[ \left[c_1^{i_1} c_2^{i_2} c_3^{i_3} \right]f = \frac{1}{i_1!i_2!i_3!} \frac{\partial^5 f}{\partial c_1^{i_1}\partial c_2^{i_2}\partial c_3^{i_3}}(0, 0, 0) \]
for any formal power series $f(c_1, c_2, c_3)$. Substituting the dual polynomials from Table \ref{tab:DualPolynomials} and the shorthands from \eqref{eq:ShorthandDual} into \eqref{eq:BarycentricMarsdenIdentity} and applying $\left[c_1^{i_1} c_2^{i_2} c_3^{i_3} \right]$, with $i_1 + i_2 + i_3 = 5$, we recover the Bernstein polynomials
\[
\frac{5!}{i_1!i_2!i_3!} \beta_1^{i_1} \beta_2^{i_2} \beta_3^{i_3} =
\sum_{i = 1}^{39} w_i Q_i(\beta_1\bfv_1 + \beta_2\bfv_2 + \beta_3\bfv_3)
\left[c_1^{i_1} c_2^{i_2} c_3^{i_3} \right] \Psi_i. 
\]
Thus one sees immediately from the monomials in the dual polynomials which simplex splines appear in the above linear combination. For example, the Bernstein polynomial $\beta_1^5$ corresponds to the lattice vector $(i_1,i_2,i_3) = (5,0,0)$, and
\[ \SimS{611000} = \frac14 \SimS{600101} + \frac18 \SimS{500201} + \frac18 \SimS{500102} + \frac18 \SimS{410201} + \frac18 \SimS{401102} + \frac14 \SimS{411101}. \]

\subsubsection*{Quasi-interpolation}
For each basis $\BBB=\{S_1,\ldots,S_{39}\}$ with dual points $\bfp^*_{i,r}$, consider the map $Q: C^0(\PS)\rightarrow \SSS^3_5(\PSB)$ defined by $Q(f) = \sum_{i=1}^{39} L_i(f) S_i$ and
\[ L_i(f) = \sum_{k=1}^5 \frac{k^5}{5!}  (-1)^{k-1} \sum_{1\le r_1 < \cdots < r_k \le 5} f\left(\frac{\boldsymbol{p}^*_{i,r_1} + \cdots + \boldsymbol{p}^*_{i,r_k}}{k}\right). \]
Note that this is an affine combination of function values of $f$, i.e.,
\[ \sum_{k=1}^5 \frac{k^5}{5!} (-1)^{k-1} {5\choose k} = 1,\]
implying that $Q$ reproduces constants. Moreover, using the Marsden identity it is easily checked \cite{WebsiteGeorg} that $Q$ reproduces polynomials up to degree 5, i.e., $Q(B^5_{ijk}) = B^5_{ijk}$, whenever $i+j+k=5$. Also, $Q$ is bounded independently of the geometry of $\PS$, since, using that $\BBB$ forms a partition of unity,
\[ \|Q(f)\|_{L_\infty(\PSsmall)} %= \|\sum_{i=1}^{39} S_i L_i(f)\|_{L_\infty(\PSsmall)}
\leq \max_i |L_i(f)|
\leq \frac{275}{3} \|f\|_{L_\infty(\PSsmall)}.%,\ K = \sum_{k=1}^5 \frac{k^5}{5!}{5\choose k} = \frac{275}{3}.
\]
Therefore, by a standard argument, $Q$ is a quasi-interpolant that approximates locally with order 6 smooth functions whose first six derivatives are in $L_\infty(\PS)$. Note that $Q$ does not reproduce all splines in $\SSS^3_5(\PSB)$.

\subsubsection*{$L_\infty$ stability and distance to the control points}
The next theorem shows that each basis is stable in the $L_\infty$ norm with a condition number bounded independent of the geometry of $\PS$.

\begin{theorem}
Let $\BBB_\star = \{S_1,\ldots,S_{39}\}$ be one of the bases $\BBB_a, \ldots, \BBB_f$, and  $f = \bfS^T \bfc \in \SSS^3_5(\PSB)$ with $\bfS^T =(S_1,\ldots,S_{39})$ and $\bfc = (c_1,\ldots,c_{39})^T$. Then there is a constant $K_\star > 0$ independent of the geometry of $\PS$, such that
\begin{equation}\label{eq:stablebases}
K^{-1}_\star \|\bfc\|_\infty %\leq \|\bff\|_\infty
\le \|f\|_{L_\infty(\PSsmall)} \leq \|\bfc\|_\infty.
\end{equation}
\end{theorem}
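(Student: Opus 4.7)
The plan is to split \eqref{eq:stablebases} into its two halves, obtaining the upper bound directly from the positive partition of unity and reducing the lower bound, via affine invariance of simplex splines, to a one-shot finite-dimensional statement on a fixed reference triangle. For the upper bound, Table~\ref{tab:DualPolynomials} shows that each element of $\BBB_\star$ is of the form $S_i=w_iQ[\bfK_i]$ with $w_i>0$ and $Q[\bfK_i]\ge 0$, so $S_i\ge 0$; combined with the partition of unity $\sum_iS_i\equiv 1$ from item~(4) of the preceding theorem, this yields
\[ |f(\bfx)|=\Big|\sum_ic_iS_i(\bfx)\Big|\le \|\bfc\|_\infty\sum_iS_i(\bfx)=\|\bfc\|_\infty, \]
which is the right-hand inequality.

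For the lower bound, my strategy is pullback to a fixed reference triangle. Fix once and for all an equilateral reference macrotriangle $R$ with 12-split vertices $\bfu_1,\ldots,\bfu_{10}$, and let $\BBB_\star^R=\{S_1^R,\ldots,S_{39}^R\}$ be the analogous basis built on the 12-split of $R$. Given any nondegenerate $\PS=[\bfv_1,\bfv_2,\bfv_3]$, let $\phi\colon R\to\PS$ be the affine bijection sending $\bfu_j$ to $\bfv_j$ for $j=1,2,3$. I would first verify, by induction on $|\bfK|$ using the recursive definition of the simplex spline, the identity $Q[\bfK]=Q[\phi(\bfK)]\circ\phi$ for every multiset $\bfK\subset\{\bfu_1,\ldots,\bfu_{10}\}$; the base case $|\bfK|=3$ holds because the indicator $\bfone_{[\bfK)}$ and the area ratio $\area(R)/\area([\bfK])$ are invariant under $\phi$, and the inductive step holds because the barycentric coefficients in the recursion are preserved. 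Consequently $S_i^R=S_i\circ\phi$ for every~$i$, and setting $f=\sum_ic_iS_i$ gives $f\circ\phi=\sum_ic_iS_i^R$ with the same coefficients and $\|f\circ\phi\|_{L_\infty(R)}=\|f\|_{L_\infty(\PSsmall)}$.

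The problem is now transported to the fixed triangle $R$, where no geometry remains to worry about. The coefficient map $\bfc\mapsto\sum_ic_iS_i^R$ is a linear isomorphism from $\RR^{39}$ onto the $39$-dimensional subspace of $L_\infty(R)$ spanned by $\BBB_\star^R$, and on a finite-dimensional space all norms are equivalent. This yields a finite constant $K_\star>0$, depending only on $\BBB_\star$ and on the once-and-for-all choice of $R$, with
\[ \|\bfc\|_\infty\le K_\star\Big\|\sum_ic_iS_i^R\Big\|_{L_\infty(R)}=K_\star\|f\|_{L_\infty(\PSsmall)}, \]
which is the left-hand inequality.

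No step is a real obstacle; the only detail needing care is the affine-invariance identity $Q[\bfK]=Q[\phi(\bfK)]\circ\phi$, and everything else is either bookkeeping or a one-line appeal to equivalence of norms in finite dimensions. If an explicit value of $K_\star$ is wanted, it can be read off on $R$ as the reciprocal of the smallest singular value of the $39\times 39$ collocation matrix of $\BBB_\star^R$ against any convenient norming family for its span, for example the point evaluations at the $39$ domain points of Figure~\ref{fig:MarsdenDomain}.
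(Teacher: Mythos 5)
Your proof is correct, and the upper bound is obtained exactly as in the paper (nonnegativity of the $w_iQ_i$ plus the partition of unity). For the lower bound, however, you take a genuinely different route. The paper uses the Lagrange interpolant at the $39$ domain points: the collocation matrix $M_\star=(S_j(\bfxi_i))$ has entries that are rational numbers independent of the geometry of $\PS$ (this is, implicitly, the same affine invariance you prove), is checked to be nonsingular, and then $\bfc=M_\star^{-1}\bff$ with $\bff=(f(\bfxi_1),\ldots,f(\bfxi_{39}))^T$ gives $\|\bfc\|_\infty\le\|M_\star^{-1}\|_\infty\,\|f\|_{L_\infty(\PSsmall)}$, i.e., the explicit constant $K_\star=\|M_\star^{-1}\|_\infty$. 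You instead pull everything back to a fixed reference triangle via the identity $Q[\bfK]=Q[\phi(\bfK)]\circ\phi$ and invoke equivalence of norms on the $39$-dimensional span of $\BBB_\star^R$. Your argument is softer: it proves existence of $K_\star$ but does not by itself produce the value $K_c\approx 72.79$ that the paper computes and then reuses (for the well-conditioned Lagrange interpolant and for the subsequent corollary bounding $\|\bfc-\bff\|_\infty$ in terms of $K_\star$); your closing remark about norming with point evaluations at the domain points is precisely how one recovers the paper's constant. One small point worth recording in your affine-invariance lemma: the half-open convex hull convention of Figure~\ref{fig:TrianglePartition} need not be preserved by an orientation-reversing $\phi$, but since the splines in question are continuous this affects only values on a set of measure zero and is harmless for \eqref{eq:stablebases}.
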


\begin{proof}
Let $\bfxi_1, \ldots,\bfxi_{39}$ be the domain points of $\BBB_\star$. A calculation shows that the collocation matrix $M_\star = (m_{ij})_{i,j=1}^{39}$, with $m_{ij} = S_j(\bfxi_i)$ is nonsingular, and its elements are rational numbers independent of the geometry of $\PS$. 

Using the Lagrange interpolant, the coefficients take the form $\bfc = M_\star^{-1} \bff$, where $\bff = (f(\bfxi_1),\ldots, f(\bfxi_{39}))^T$. Hence, since $\bfS$ forms a partition of unity, \eqref{eq:stablebases} holds with $K_\star = \|M^{-1}_\star\|_\infty$. 
\end{proof}

Note that $K_\star$ is an upper bound for the condition number of the basis $\BBB_\star$, and is in fact the infinity norm condition number of the matrix $M_\star$, because $\|M_\star\|_\infty = 1$. The smallest constant $\|M^{-1}_\star\|_\infty$ is obtained for $\BBB_c$, in which case
\[ K_c = \|M_c^{-1}\|_\infty = \frac{60866923187443943219194678615331}{836197581250152380489105335680} \approx 72.7901.\]
Hence, there is a well-conditioned Lagrange interpolant at the domain points of the basis $\BBB_c$.

We can now bound the distance between the B\'ezier ordinates and the values of a spline at the corresponding domain points.
\begin{corollary}
Let $h$ be the longest edge in $\PS$, and let $f = \bfS^T \bfc$ with Hessian matrix $H$ and values $\bff = (f(\bfxi_1),\ldots, f(\bfxi_{39}))^T$. Then 
\[ \|\bfc - \bff\|_\infty \le 2 K_\star h^2 \max_{\bfx \in \PSsmall} \|H(\bfx)\|_\infty. \]
\end{corollary}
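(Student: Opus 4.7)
The plan is to exploit the affine reproduction property of $\BBB_\star$ together with the $L_\infty$ stability bound from the preceding theorem, reducing the problem to bounding the pointwise error of the best affine approximation of $f$ via Taylor's theorem.

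From the partition of unity $\sum_{i=1}^{39} S_i(\bfx) = 1$ and the domain point identity $\sum_{i=1}^{39} \bfxi_i S_i(\bfx) = \bfx$ (obtained from \eqref{eq:DomainPoints} with $S_i = w_i Q_i$), any affine polynomial $L$ is reproduced as $L = \sum_i L(\bfxi_i) S_i$. In other words, the coefficient vector $\bfc_L$ of $L$ in $\BBB_\star$ coincides with its value vector $\bff_L$ at the domain points. Fix any $\bfx_0 \in \PS$ and take $L$ to be the first-order Taylor polynomial of $f$ at $\bfx_0$, so that $g := f - L$. Using linearity of the coefficient and value maps together with $\bfc_L = \bff_L$,
\[
\bfc - \bff = \bfc_g - \bff_g.
\]
The previous theorem applied to $g$ gives $\|\bfc_g\|_\infty \le K_\star \|g\|_{L_\infty(\PSsmall)}$, and trivially $\|\bff_g\|_\infty \le \|g\|_{L_\infty(\PSsmall)}$. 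Since $\|M_\star\|_\infty = 1$ forces $K_\star = \|M_\star^{-1}\|_\infty \ge 1$, the triangle inequality yields
\[
\|\bfc - \bff\|_\infty \le (K_\star + 1) \|g\|_{L_\infty(\PSsmall)} \le 2 K_\star \|g\|_{L_\infty(\PSsmall)}.
\]

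It remains to show $\|g\|_{L_\infty(\PSsmall)} \le h^2 \max_{\bfx \in \PSsmall} \|H(\bfx)\|_\infty$ (the factor $\tfrac12$ is absorbed freely into $2 K_\star$). Taylor's theorem with Lagrange remainder provides, for each $\bfx \in \PS$, a point $\bfy$ on the segment $[\bfx_0, \bfx] \subset \PS$ with $g(\bfx) = \tfrac12 (\bfx - \bfx_0)^\mT H(\bfy) (\bfx - \bfx_0)$. The Euclidean diameter of $\PS$ equals its longest edge $h$, so $\|\bfx - \bfx_0\|_2 \le h$, and for a symmetric $2 \times 2$ matrix one has $\|H\|_2 \le \sqrt{\|H\|_1 \|H\|_\infty} = \|H\|_\infty$ (the $1$- and $\infty$-norms agree by symmetry). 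Therefore $|g(\bfx)| \le \tfrac12 h^2 \|H(\bfy)\|_\infty$, and substitution completes the proof.

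The argument is essentially routine; the only subtle point is the norm comparison for the Hessian, which is what forces the slack factor of $2$ in $2 K_\star$. The entire approach fails without linear reproduction, which in turn relies on the Marsden identity that singles out $\BBB_a,\ldots,\BBB_f$ in the first place.
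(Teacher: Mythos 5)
Your proof is correct and takes essentially the same route as the paper's: subtract the first-order Taylor polynomial, observe that the remainder lies in $\SSS^3_5(\PSB)$ so its coefficients are controlled by $K_\star$ times its sup-norm, and bound that sup-norm via the Hessian. The only (immaterial) difference is that the paper expands around $\bfx_0 = \bfxi_i$ separately for each $i$, which makes the remainder vanish at $\bfxi_i$ and so avoids your $(K_\star + 1) \le 2K_\star$ step; your version with a fixed $\bfx_0$ and the norm comparison $\|H\|_2 \le \|H\|_\infty$ still lands within the stated constant.
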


\begin{proof}
Consider the first-order Taylor expansion of $f$ at $\bfx_0$,
\[ f(\bfx) - f(\bfx_0) = (\bfx-\bfx_0) \nabla f(\bfx_0) + g(\bfx). \]
As the error term $g \in \SSS^3_5(\PSB)$, it takes the form $g = \sum_{i=1}^{39} b_i S_i$, with
\[ c_i - f(\bfx_0) = (\bfxi_i - \bfx_0)\nabla f(\bfx_0) + b_i. \]
In particular for $\bfx_0 = \bfxi_i$, we obtain
\begin{align*} |c_i - f(\bfxi_i)| = |b_i|
& \le K_\star \max_{\bfx\in \PSsmall} |g(\bfx)|\\
& \le \frac{1}{2} K_\star \max_{\bfx, \bfy\in \PSsmall} \left|(\bfx - \bfxi_i)^T H(\bfy) (\bfx - \bfxi_i)\right|,
\end{align*}
from which the theorem follows.
\end{proof}

\subsection{The basis $\BBB_c$}
While the remainder of the paper can be carried out for all six bases, we now restrict our discussion to the basis $\BBB_c = \{w_i Q_i\}_{i=1}^{39}$ for several reasons. First of all, the condition number $K_c$ is smallest. Secondly, this basis has the most localized support, because it contains the splines
\[ \left[\frac12 \SimS{320201}, \frac34 \SimS{220211}\right]_{S_3}, \]
as opposed to splines with full support. Finally, for $k=0,1,2$ and any direction $\bfu$ not parallel to the edge $e$ of $\PS$, the number of additional splines $Q_i$ for which $D^k_\bfu Q_i |_e$ is nonzero corresponds to the dimension of the space of univariate splines of degree $5 - k$ on the knot multiset $\{0^{d+1-k}\,0.5^2\, 1^{d+1-k}\}$. This allows for relatively pretty smoothness conditions analogous to the Bernstein-B\'ezier case.

\begin{table}[t]
{\small
\begin{tabular*}{\columnwidth}{@{ }@{\extracolsep{\stretch{1}}}*{4}{l}@{ }}
\toprule
$B_1^5 := B[0^6\,0.5^1]$     & $B_1^4 := B[0^5\,0.5^1]$     & $B_1^3 := B[0^4\,0.5^1]$     & $B_1^2 := B[0^3\,0.5^1]$\\
$B_2^5 := B[0^5\,0.5^2]$     & $B_2^4 := B[0^4\,0.5^2]$     & $B_2^3 := B[0^3\,0.5^2]$     & $B_2^2 := B[0^2\,0.5^2]$\\
$B_3^5 := B[0^4\,0.5^2\,1^1]$& $B_3^4 := B[0^3\,0.5^2\,1^1]$& $B_3^3 := B[0^2\,0.5^2\,1^1]$& $B_3^2 := B[0^1\,0.5^2\,1^1]$\\
$B_4^5 := B[0^3\,0.5^2\,1^2]$& $B_4^4 := B[0^2\,0.5^2\,1^2]$& $B_4^3 := B[0^1\,0.5^2\,1^2]$& $B_4^2 := B[0.5^2\,1^2]$\\
$B_5^5 := B[0^2\,0.5^2\,1^3]$& $B_5^4 := B[0^1\,0.5^2\,1^3]$& $B_5^3 := B[0.5^2\,1^3]$     & $B_5^2 := B[0.5^1\,1^3]$\\
$B_6^5 := B[0^1\,0.5^2\,1^4]$& $B_6^4 := B[0.5^2\,1^4]$     & $B_6^3 := B[0.5^1\,1^4]$     &\\
$B_7^5 := B[0.5^2\,1^5]$     & $B_7^4 := B[0.5^1\,1^5]$     & \\
$B_8^5 := B[0.5^1\,1^6]$     &  & \\
\bottomrule
\end{tabular*}
}
\caption{Shorthands for univariate B-splines of degree $d$ on the open knot multiset $\{0^{d+1}\,0.5^2\,1^{d+1}\}$, for $d = 5,4,3,2$.}\label{tab:shorthandsBi}
\end{table}

\begin{figure}
\begin{center}
\includegraphics[scale=1.05, bb = 0 10 345 200]{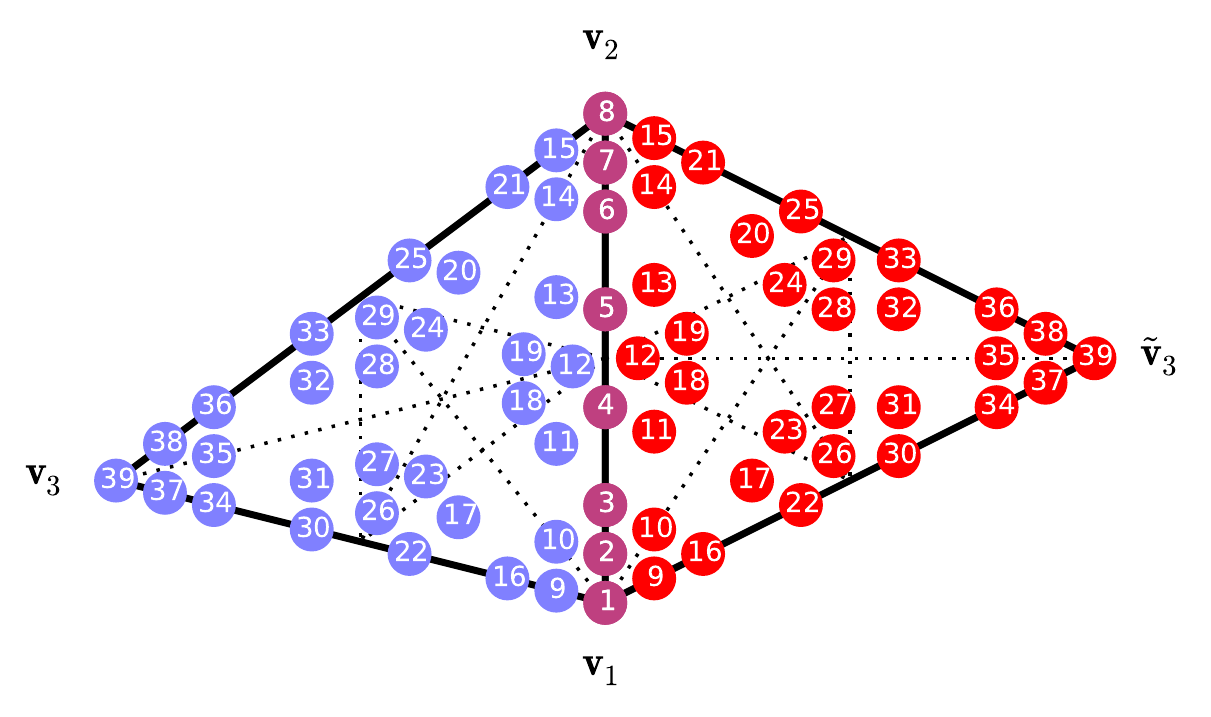}
\end{center}
\caption[]{The domain points of the bases $\BBB_c$ and $\tilde{\BBB}_c$, ordered by a decreasing number of knots on $[\bfv_1, \bfv_2]$, i.e., 1 knot for $Q_1, \ldots, Q_8$, 2 knots for $Q_9, \ldots, Q_{15}$, 3 knots for $Q_{16}, \ldots, Q_{21}$, 4 knots for $Q_{22}, \ldots, Q_{25}$, and more than $4$ knots for $Q_{26},\ldots, Q_{39}$. }\label{fig:basis-order}
\end{figure}

\begin{table}
\thisfloatpagestyle{empty}
\begin{adjustwidth}{-0.15\columnwidth}{-0.15\columnwidth}
\vspace{-2.5em}
\centering\begin{tabular}{ccccc}
\toprule
$i$ & $Q_i$ & $Q_i|_e$
           & $\frac{1}{5} D_{(\alpha_1, \alpha_2, \alpha_3)} Q_i|_e$
           & $\frac{1}{4\cdot 5} D^2_{(\alpha_1, \alpha_2, \alpha_3)} Q_i|_e$ \\ \midrule
1 & \SimS{600101} & $ 4 B_1^5$
              & $ 8\alpha_1 B_1^4$
              & $ 16\alpha_1^2 B_1^3$\\
2 & \SimS{500201} & $ 4 B_2^5$
              & $ 8(\alpha_2 B_1^4 + \alpha_1 B_2^4)$
              & $ 32\alpha_1\alpha_2 B_1^3 + 16 \alpha_1^2 B_2^3$\\
3 & \SimS{410201} & $ 2 B_3^5$
              & $ 4\alpha_2 B_2^4 + 2(2\alpha_1 + \alpha_2) B_3^4$
              & $ 8\alpha_2^2 B_1^3 + 4\alpha_2(4\alpha_1 + \alpha_2) B_2^3 + 2(2\alpha_1 + \alpha_2)^2 B_3^3$\\
4 & \SimS{320201} & $ 2 B_4^5$
              & $ 2\alpha_2 B_3^4 + 2(2\alpha_1 + \alpha_2) B_4^4$
              & $ 4\alpha_2^2 B_2^3 + 4\alpha_2(2\alpha_1 + \alpha_2) B_3^3 + 2(2\alpha_1 + \alpha_2)^2 B_4^3$\\
5 & \SimS{230210} & $ 2 B_5^5$
              & $ 2(\alpha_1 + 2\alpha_2) B_4^4 + 2\alpha_1 B_5^4$
              & $ 2(\alpha_1 + 2\alpha_2)^2 B_3^3 + 4\alpha_1(\alpha_1 + 2\alpha_2)B_4^3 + 4\alpha_1^2 B_5^3$\\
6 & \SimS{140210} & $ 2 B_6^5$
              & $ 2(\alpha_1 + 2\alpha_2) B_5^4 + 4\alpha_1 B_6^4$
              & $ 2(\alpha_1 + 2\alpha_2)^2 B_4^3 + 4\alpha_1(\alpha_1 + 4\alpha_2)B_5^3 + 8\alpha_1^2 B_6^3$\\
7 & \SimS{050210} & $ 4 B_7^5$
              & $ 8(\alpha_2B_6^4 + \alpha_1 B_7^4)$
              & $ 16\alpha_2^2 B_5^3 + 32\alpha_1\alpha_2 B_6^3$\\
8 & \SimS{060110} & $ 4 B_8^5$
              & $ 8\alpha_2 B_7^4$
              & $ 16\alpha_2^2 B_6^3$\\ \midrule
9 & \SimS{500102} & 0 & $8\alpha_3 B_1^4$
                  & $32\alpha_1\alpha_3 B_1^3$\\
10 & \SimS{411101} & 0 & $ \alpha_3(2B_2^4 + B_3^4)$
      & $8\alpha_2\alpha_3 B_1^3 + \alpha_3(3\alpha_1 + \alpha_2) (2 B_2^3 + B_3^3)$\\
11 & \SimS{311201} & 0 & $2\alpha_3 B_3^4$
                  & $8\alpha_2\alpha_3 B_2^3 + 2\alpha_3(3\alpha_1 + \alpha_2) B_3^3$\\
12 & \SimS{220211} & 0 & $4\alpha_3 B_4^4$
                  & $4\alpha_3(\alpha_1 + 3\alpha_2)B_3^3 + 4\alpha_3(3\alpha_1 + \alpha_2) B_4^3$\\
13 & \SimS{131210} & 0 & $2\alpha_3 B_5^4$
                  & $2\alpha_3(\alpha_1 + 3\alpha_2)B_4^3 + 8\alpha_1\alpha_3 B_5^3$\\
14 & \SimS{141110} & 0 & $ \alpha_3(B_5^4 + 2B_6^4)$
      & $\alpha_3(\alpha_1 + 3\alpha_2) (B_4^3 + 2 B_5^3) + 8 \alpha_1\alpha_3 B_6^3$\\
15 & \SimS{050120} & 0 & $8\alpha_3 B_7^4$
                  & $32\alpha_2\alpha_3 B_6^3$\\ \midrule
16 & \SimS{401102} & 0 & 0 & $8\alpha_3^2 B_1^3$\\
17 & \SimS{311102} & 0 & 0 & $2\alpha_3^2(2B_2^3 + B_3^2)$\\
18 & \SimS{211211} & 0 & 0 & $4\alpha_3^2 B_3^2$\\
19 & \SimS{121211} & 0 & 0 & $4\alpha_3^2 B_4^3$\\
20 & \SimS{131120} & 0 & 0 & $2\alpha_3^2(B_4^3 + 2B_5^3)$\\
21 & \SimS{041120} & 0 & 0 & $8\alpha_3^2 B_6^3$\\ \midrule
22 & \SimS{302102} & 0 & 0 & 0 \\
23 & \SimS{211112} & 0 & 0 & 0 \\
24 & \SimS{121121} & 0 & 0 & 0 \\
25 & \SimS{032120} & 0 & 0 & 0 \\ \bottomrule
\end{tabular}
\end{adjustwidth}

\caption{Restriction of the splines $Q_1,\ldots,Q_{25}$, and their directional derivatives, to the boundary edge $e = [\bfv_1, \bfv_2]$.}\label{tab:Qrestrictions1}
\end{table}

\begin{table}
\thisfloatpagestyle{empty}
\begin{adjustwidth}{-0.15\columnwidth}{-0.15\columnwidth}
\vspace{-2.5em}
\centering\begin{tabular}{ccc}
\toprule
$i$  & $Q_i$ & $\frac{1}{3\cdot 4\cdot 5} D^3_{(\alpha_1, \alpha_2, \alpha_3)} Q_i|_e$ \\ \midrule
1 & \SimS{600101} & $32 \alpha_1^3 B_1^2$\\
2 & \SimS{500201} & $96 \alpha_1^2 \alpha_2 B_1^2 + 32\alpha_1^3 B_2^2$ \\
3 & \SimS{410201} & $8\alpha_2^2(6\alpha_1 + \alpha_2) B_1^2 + 4\alpha_2 (12\alpha_1^2 + 6\alpha_1\alpha_2 + \alpha_2^2)B_2^2 + 2(2\alpha_1 + \alpha_2)^3 B_3^2$\\
4 & \SimS{320201} & $8\alpha_2^3 B_1^2 + 8\alpha_2^2(3\alpha_1 + \alpha_2)B_2^2 + 6\alpha_2(2\alpha_1 + \alpha_2)^2 B_3^2 + 4(2\alpha_1 + \alpha_2)^3 B_4^2$\\
5 & \SimS{230210} & $4(\alpha_1 + 2\alpha_2)^3 B_2^2 + 6\alpha_1(\alpha_1 + 2\alpha_2)^2 B_3^2 + 8\alpha_1^2(\alpha_1 + 3\alpha_2)B_4^2 + 8\alpha_1^3 B_5^2$\\
6 & \SimS{140210} & $2(\alpha_1 + 2\alpha_2)^3 B_3^2 + 4\alpha_1(\alpha_1^2 + 6\alpha_1\alpha_2 + 12\alpha_2^2)B_4^2 + 8\alpha_1^2(\alpha_1 + 6\alpha_2)B_5^2$\\
7 & \SimS{050210} & $32\alpha_2^3 B_4^2 + 96\alpha_1\alpha_2^2 B_5^2$\\
8 & \SimS{060110} & $32\alpha_2^3 B_5^2$\\ \midrule
9 & \SimS{500102} & $96\alpha_1^2\alpha_3 B_1^2$\\
10 & \SimS{411101} & $36\alpha_1\alpha_2\alpha_3 B_1^2 + \alpha_3(7\alpha_1^2 + 5\alpha_1\alpha_2+\alpha_2^2) (2 B_2^3 + B_3^2)$\\
11 & \SimS{311201} & $24\alpha_2^2\alpha_3 B_1^2 + 36\alpha_1\alpha_2\alpha_3 B_2^2 + 2\alpha_3(7\alpha_1^2 + 5\alpha_1\alpha_2 + \alpha_2^2)B_3^2$ \\
12 & \SimS{220211} & $8\alpha_3(\alpha_1^2 + 5\alpha_1\alpha_2 + 7\alpha_2^2)B_2^2 + 8\alpha_3(2\alpha_1^2+7\alpha_1\alpha_2+2\alpha_2^2)B_3^2 + 8\alpha_3(7\alpha_1^2 + 5\alpha_1\alpha_2 + \alpha_2^2) B_4^2$\\
13 & \SimS{131210} & $2\alpha_3(\alpha_1^2 + 5\alpha_1\alpha_2 + 7\alpha_2^2)B_3^2 + 36\alpha_1\alpha_2\alpha_3 B_4^2 + 24\alpha_1^2\alpha_3 B_5^2$\\
14 & \SimS{141110} & $\alpha_3(\alpha_1^2 + 5\alpha_1\alpha_2 + 7\alpha_2^2)(B_3^2 + 2B_4^2) + 36\alpha_1\alpha_2\alpha_3 B_5^2$ \\
15 & \SimS{050120} & $96\alpha_2^2\alpha_3 B_5^2$\\ \midrule
16 & \SimS{401102} & $8\alpha_3^2(5\alpha_1 - \alpha_2) B_1^2$\\
17 & \SimS{311102} & $24\alpha_2\alpha_3^2 B_1^2 + 2\alpha_3^2 (5\alpha_1 + 2\alpha_2)(2B_2^2 + B_3^2)$\\
18 & \SimS{211211} & $8\alpha_3^2(\alpha_1 + 4\alpha_2) B_2^2 + 4\alpha_3^2(4\alpha_1 + \alpha_2) B_3^2$\\
19 & \SimS{121211} & $4\alpha_3^2(\alpha_1 + 4\alpha_2) B_3^2 + 8\alpha_3^2(4\alpha_1 + \alpha_2) B_4^2$\\
20 & \SimS{131120} & $2\alpha_3^2(5\alpha_2 + 2\alpha_1)(B_3^2 + 2B_4^2) + 24\alpha_1\alpha_3^2 B_5^2$\\
21 & \SimS{041120} & $8\alpha_3^2(5\alpha_2 - \alpha_1) B_5^2$\\ \midrule
22 & \SimS{302102} & $8\alpha_3^3 B_1^2$\\
23 & \SimS{211112} & $4\alpha_3^3(2B_2^2 + B_3^2)$\\
24 & \SimS{121121} & $4\alpha_3^3(B_3^2 + 2B_4^2)$\\
25 & \SimS{032120} & $8\alpha_3^3 B_5^2$\\ \bottomrule
\end{tabular}
\end{adjustwidth}

\caption{Restriction of the third-order directional derivatives of the splines $Q_1,\ldots,Q_{25}$ to the boundary edge $e = [\bfv_1, \bfv_2]$.}\label{tab:Qrestrictions2}
\end{table}

\subsubsection*{Derivatives on the boundary}
For $d = 5,4,3,2$ and $i = 1,\ldots,d+3$, let $B_i^d$ be the univariate B-spline in Table \ref{tab:shorthandsBi}. Let $(\alpha_1, \alpha_2, \alpha_3)$ be directional coordinates of a vector $\bfu$ with respect to the triangle $[\bfv_1, \bfv_2, \bfv_3]$. Denote by $|_e$ the substitution of $\bfx$ by $(1-t)\bfv_1 + t\bfv_2$. 

In Figure \ref{fig:basis-order} and Tables \ref{tab:Qrestrictions1}, \ref{tab:Qrestrictions2} we order the simplex splines in $\BBB_c$ by the number of knots outside of $e = [\bfv_1, \bfv_2]$. Applying \eqref{eq:MicchelliDifferentiation}, \eqref{eq:knotinsertion}, and \eqref{eq:restriction} we can express, for any simplex spline $Q_i$, the restricted derivative $D_\bfu^k Q_i|_e$ of order $k = 0,1,2,3$ as a linear combination of $B_j^{5-k}$, $j = 1,\ldots,8-k$. These linear combinations are listed in Tables \ref{tab:Qrestrictions1}, \ref{tab:Qrestrictions2} for $Q_1,\ldots, Q_{25}$ and, by \eqref{eq:MicchelliDifferentiation}, are zero for the remaining simplex splines $Q_{26}, \ldots, Q_{39}$.

\begin{example}
We derive the first two entries in the third row in Table \ref{tab:Qrestrictions1}. By~\eqref{eq:restriction}, 
\[ \left.\SimS{410201}\right|_e
 = \frac{\area(\PS)}{\area([\bfv_1, \bfv_2, \bfv_6])} B_3^5
 = 2 B_3^5. \]
Since $\bfu = 2\alpha_1\bfv_1 + 2\alpha_2\bfv_4 + 2\alpha_2\bfv_6$, differentiation and knot insertion yields
\begin{align*}
\left.\frac15 D_\bfu\SimS{410201}\right|_e
& = 2\alpha_1 \left.\SimS{310201}\right|_e + 2\alpha_2 \left.\SimS{410101}\right|_e + 2 \alpha_3\cdot 0\\
& = 2\alpha_1 \left.\SimS{310201}\right|_e + \alpha_2 \left(\left.\SimS{310201}\right|_e + \left.\SimS{400201}\right|_e\right)\\
& = (2\alpha_1 + \alpha_2)\left.\SimS{310201}\right|_e + \alpha_2\left.\SimS{400201}\right|_e\\
& = 2(2\alpha_1 + \alpha_2) B_3^4 + 4\alpha_2 B_2^4.
\end{align*}
\end{example}

In the next section we apply Tables \ref{tab:Qrestrictions1} and \ref{tab:Qrestrictions2} to derive smoothness conditions for splines on adjacent 12-splits expressed in the basis $\BBB_c$; see Figure~\ref{fig:basis-order}. Analogously, these tables are useful for deriving smoothness conditions with tensor-product, B\'ezier, and more exotic patches.

\subsubsection*{Smoothness conditions}
Let $\PS := [\bfv_1, \bfv_2, \bfv_3]$ and $\tilde{\PS} := [\bfv_1, \bfv_2, \tilde{\bfv}_3]$ be triangles sharing the edge $e := [\bfv_1, \bfv_2]$. Figure \ref{fig:basis-order} shows the domain points of the bases $\BBB_c = \{w_i Q_i\}_i$ and $\tilde{\BBB}_c = \{w_i \tilde{Q}_i\}_i$ of $\SSS^3_5(\PSB)$ and $\SSS^3_5(\tilde{\PSB})$. Let
\begin{equation}\label{eq:neighboringsimplexsplines}
f(\bfv) := \sum_{i=1}^{39} c_i w_i Q_i (\bfv), \ \bfv\in \PS,\qquad
\tilde{f}(\bfv) := \sum_{i=1}^{39} \tilde{c}_i w_i \tilde{Q}_i (\bfv), \ \bfv\in \tilde{\PS}
\end{equation}
be splines defined on these triangles. Imposing a smooth join of $f$ and $\tilde{f}$ along $e$ translates into linear relations among the B\'ezier ordinates $c_i$ and $\tilde{c}_i$.

\begin{theorem}
Let $(\beta_1,\beta_2,\beta_3)$ be the barycentric coordinates of $\tilde{\bfv}_3$ with respect to the triangle $\PS$. Then $f$ and $\tilde{f}$ meet with 

\noindent$C^0$ smoothness if and only if $\tilde{c}_i = c_i$, for $i = 1,\ldots,8$;\\
\noindent$C^1$ smoothness if and only if in addition\\
~\qquad\begin{tabular}{lllll}
&$\tilde{c}_9    = \beta_1c_1 + \beta_2c_2 + \beta_3c_9   $, &
&$\tilde{c}_{15} = \beta_1c_7 + \beta_2c_8 + \beta_3c_{15}$, \\
&$\tilde{c}_{10} = \beta_1c_2 + \beta_2c_3 + \beta_3c_{10}$,  &
&$\tilde{c}_{14} = \beta_1c_6 + \beta_2c_7 + \beta_3c_{14}$, \\
&$\tilde{c}_{11} = \beta_1(2c_3 - c_2) + \beta_2c_4 + \beta_3c_{11}$, &
&$\tilde{c}_{13} = \beta_1c_5 + \beta_2(2c_6 - c_7) + \beta_3c_{13}$, \\
&\multicolumn{2}{l}{$\tilde{c}_{12} = \beta_1\frac{2c_4 + c_5}{3} + \beta_2\frac{c_4 + 2c_5}{3} + \beta_3 c_{12}$;}
\end{tabular}

\noindent$C^2$ smoothness if and only if in addition \\
\begin{tabular}{lll}
& $\tilde{c}_{16} = \beta_1^2 c_1 + 2\beta_1\beta_2 c_2 + \beta_2^2c_3 + 2\beta_1\beta_3 c_9 + 2\beta_2\beta_3 c_{10} + \beta_3^2 c_{16}$,\\

& $\tilde{c}_{17} = \beta_1^2 c_2 + \beta_2^2 c_4 + \beta_3^2c_{17}+ 2\beta_1\beta_2 \frac{3c_3 - c_2}{2} + 2\beta_1\beta_3 \frac{3c_{10} - c_2}{2} + 2\beta_2\beta_3 \frac{c_{10} + 2c_{11} - c_3}{2}$,\\

& $\tilde{c}_{18} = \beta_1^2 \frac{2c_3 + 2c_4 - c_2 }{3} + \beta_2^2 \frac{c_4 + 2c_5}{3} + \beta_3^2c_{18}
+ 2\beta_1\beta_2 \frac{c_2 - 2c_3 + 6 c_4 + c_5}{6} $\\
& \qquad $ + 2\beta_1\beta_3 \frac{c_2 - 2c_3 + 2c_4 - c_5 + 3c_{11} + 3 c_{12}}{6} + 2\beta_2\beta_3 \frac{9c_{12} - 2c_5 - c_{11}}{6}$, \\

& $\tilde{c}_{19} = \beta_1^2 \frac{2c_4 + c_5}{3} + \beta_2^2 \frac{2c_5 + 2c_6 - c_7}{3} +\beta_3^2c_{19}
+ 2\beta_1\beta_2 \frac{c_4 + 6c_5 - 2c_6 + c_7}{6} $\\
& \qquad $+2\beta_2\beta_3 \frac{c_7 - 2c_6 + 2c_5 - c_4 + 3c_{13} + 3c_{12}}{6} +  2\beta_1\beta_3 \frac{9c_{12} -2c_4 - c_{13}}{6} $,\\
 
&$\tilde{c}_{20} = \beta_1^2 c_5 + \beta_2^2 c_7 + \beta_3^2 c_{20} + 2\beta_1\beta_2 \frac{3 c_6 - c_7}{2} + 2\beta_1\beta_3 \frac{c_{14} + 2 c_{13} - c_6}{2} + 2\beta_2 \beta_3 \frac{3 c_{14} - c_7}{2}$, \\

&$\tilde{c}_{21} = \beta_1^2 c_6 + 2\beta_1\beta_2 c_7 + \beta_2^2 c_8 + 2\beta_1\beta_3 c_{14} + 2\beta_2\beta_3 c_{15} + \beta_3^2 c_{21}$;\\
\end{tabular}

\noindent $C^3$ smoothness if and only if in addition \\
\begin{tabular}{lll}

& $ \tilde{c}_{22} = \beta_1^3 c_1 + 3\beta_1^2 \beta_2 \frac{4c_2 - c_1}{3} + 3\beta_1 \beta_2^2 \frac{5c_3 - 2c_2}{3} + \beta_2^3 c_4 + 3\beta_2^2 \beta_3 \frac{c_{10} - c_3 + 3c_{11}}{3} $\\
& \qquad $+ 3\beta_2 \beta_3^2 \frac{c_{10} - c_{16} + 3 c_{17}}{3} + \beta_3^3 c_{22} + 3\beta_3^2 \beta_1 \frac{5c_{16} - 2c_9 }{3} 
+ 3\beta_3 \beta_1^2 \frac{4c_9 - c_1}{3}$\\
& \qquad $+ 6 \beta_1 \beta_2 \beta_3 \frac{5c_{10} - c_2 - c_9}{3}$,\\

& $ \tilde{c}_{23} = \beta_1^3 \frac{c_2 + 2c_3}{3}
    + 3\beta_1^2\beta_2 \frac{- 2c_2 + 8 c_3 + 3 c_4}{9}
    + 3\beta_1\beta_2^2 \frac{c_2 - c_3 + 8 c_4 + c_5}{9}
    + \beta_2^3 \frac{c_4 + 2c_5}{3}$\\
& \qquad $ + 3 \beta_2^2 \beta_3 \frac{c_{10} + 15 c_{12} - c_3 - 2c_4 - 4c_5}{9}
    + 3 \beta_2 \beta_3^2 \frac{-6 c_{12} + 2 c_{17} + 12 c_{18} - c_4 + 2 c_5}{9}$\\
& \qquad $ + 3 \beta_1 \beta_3^2 \frac{c_{10} + 3c_{11} - 3c_{12} + 5c_{17} + 3c_{18} + c_2 - 2c_3 + c_5}{9}
    + 3 \beta_1^2 \beta_3 \frac{8 c_{10} + 3 c_{11} - 2c_2}{9}$\\
& \qquad $ + 6 \beta_1 \beta_2 \beta_3 \frac{3c_{10} + 6 c_{11} + 3 c_{12} + c_2 - 4 c_3 + c_4 - c_5}{9}
    +   \beta_3^3 c_{23}$,\\

& $ \tilde{c}_{24} = \beta_1^3 \frac{2c_4 + c_5}{3}
+ 3 \beta_1^2 \beta_2 \frac{c_4 + 8c_5 - c_6 + c_7}{9}
+ 3 \beta_1 \beta_2^2 \frac{3c_5 + 8c_6 - 2c_7}{9}
+ \beta_2^3 \frac{2c_6 + c_7}{3}$\\
& \qquad $
+ 3 \beta_2^2 \beta_3 \frac{3c_{13} + 8c_{14} - 2c_7}{9}
+ 3 \beta_2 \beta_3^2 \frac{-3c_{12} + 3 c_{13} + c_{14} + 3 c_{19} + 5 c_{20} + c_4 - 2 c_6 + c_7}{9}$\\
& \qquad $
+ 3 \beta_1 \beta_3^2 \frac{-6 c_{12} + 12 c_{19} + 2 c_{20} + 2 c_4 - c_5}{9}
+ 3 \beta_1^2 \beta_3 \frac{15 c_{12} + c_{14} - 4 c_4 - 2 c_5 - c_6}{9}$\\
& \qquad $
+ 6 \beta_1 \beta_2 \beta_3 \frac{3 c_{12} + 6 c_{13} + 3 c_{14} - c_4 + c_5 - 4 c_6 + c_7}{9} + \beta_3^3 c_{24}$,\\

& $ \tilde{c}_{25} = \beta_1^3 c_5 + 3\beta_1^2 \beta_2 \frac{5c_6 - 2c_7}{3} + 3 \beta_1 \beta_2^2 \frac{4c_7 - c_8}{3} + \beta_2^3 c_8 + 3\beta_2^2 \beta_3 \frac{4c_{15} - c_8}{3}$\\
& \qquad $ + 3\beta_2 \beta_3^2 \frac{5c_{21} - 2c_{15}}{3} + 3\beta_3^2 \beta_1 \frac{c_{14} + 3c_{20} - c_{21}}{3} 
 + 3\beta_3 \beta_1^2 \frac{3c_{13} + c_{14} - c_6}{3} $ \\
& \qquad $+ 6\beta_1\beta_2\beta_3 \frac{5 c_{14} - c_7 - c_{15}}{3} + \beta_3^3 c_{25}$, \\

& $\phantom{+} 0 = (3\beta_1^2\beta_2 - 3\beta_1\beta_2^2) \frac{c_2 - 2 c_3 + 2 c_4 - 2 c_5 + 2 c_6 - c_7}{3}$\\
& \qquad $+ 3\beta_1^2 \beta_3 \frac{c_{11} - 3c_{12} + c_{13} + c_2 - 2 c_3 + 2 c_4}{3}$\\
& \qquad $+ 3\beta_2^2 \beta_3 \frac{c_{11} - 3c_{12} + c_{13} + c_7 - 2 c_6 + 2 c_5}{3}$\\
& \qquad $+ 3\beta_1 \beta_3^2 \frac{-5 c_{11} + 6 c_{12} + c_{13} + 9 c_{18} - 9 c_{19} - 2c_2 + 4c_3 - 4c_4}{6}$\\
& \qquad $+ 3\beta_2 \beta_3^2 \frac{-5 c_{13} + 6 c_{12} + c_{11} + 9 c_{19} - 9 c_{18} - 2c_7 + 4c_6 - 4c_5}{6}$\\
& \qquad $+ 6\beta_1 \beta_2 \beta_3 \frac{-2c_{11} + 6 c_{12} - 2 c_{13} - c_2 + 2 c_3 - 2 c_4 - 2 c_5 + 2 c_6 - c_7}{3}$.\\

\end{tabular}
\end{theorem}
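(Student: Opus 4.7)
The plan is to reduce the theorem to a matching of coefficients against the univariate B-spline bases on the shared edge $e$, using the explicit restriction formulas in Tables~\ref{tab:Qrestrictions1} and~\ref{tab:Qrestrictions2}. Recall that $f$ and $\tilde{f}$ join with $C^k$ smoothness along $e$ if and only if $D^j_\bfu f|_e = D^j_\bfu \tilde{f}|_e$ for every $j = 0, 1, \ldots, k$ and some (equivalently every) direction $\bfu$ not tangent to $e$. Thus each level of smoothness produces its conditions by matching coefficients on the two sides of such an equality.

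To pick a convenient direction, choose $\bfu$ with directional coordinates $(0, 0, 1)$ with respect to $\tilde{\PS}$. From $\tilde{\bfv}_3 = \beta_1\bfv_1 + \beta_2\bfv_2 + \beta_3\bfv_3$ and the affine transformation of directional coordinates, the same $\bfu$ has directional coordinates $(\beta_1, \beta_2, \beta_3)$ with respect to $\PS$; since $\beta_3 \neq 0$, the vector $\bfu$ is transversal to $e$. The restriction of each $D^k_\bfu Q_i$ (respectively $D^k_\bfu \tilde{Q}_i$) to $e$ is then read off from Tables~\ref{tab:Qrestrictions1} and~\ref{tab:Qrestrictions2} by substituting $(\alpha_1, \alpha_2, \alpha_3) = (\beta_1, \beta_2, \beta_3)$ on the $\PS$-side and $(\alpha_1, \alpha_2, \alpha_3) = (0, 0, 1)$ on the $\tilde{\PS}$-side.

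The proof proceeds by induction on the smoothness order. For $C^0$, only $Q_1, \ldots, Q_8$ and $\tilde{Q}_1, \ldots, \tilde{Q}_8$ are nonzero on $e$, and with the weights $w_i$ from $\BBB_c$ they reduce to the basis $B^5_1, \ldots, B^5_8$; linear independence forces $\tilde{c}_i = c_i$ for $i = 1, \ldots, 8$. For the $C^1$ and $C^2$ steps, comparing first and second directional derivatives on $e$ yields identities in the spans of $\{B^4_1, \ldots, B^4_7\}$ and $\{B^3_1, \ldots, B^3_6\}$ respectively. After substituting the already-proved lower-order relations, the resulting systems are triangular in the new unknowns $\tilde{c}_9, \ldots, \tilde{c}_{15}$ and $\tilde{c}_{16}, \ldots, \tilde{c}_{21}$, yielding the stated closed-form formulas by back substitution. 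The dual-point correspondence of Table~\ref{tab:DualPolynomials} makes it easy to predict the form of each equation as a barycentric combination of the $c_j$ at the appropriate neighbouring positions.

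The $C^3$ step is the main obstacle, since $\{B^2_1, \ldots, B^2_5\}$ has dimension five while only four new unknowns $\tilde{c}_{22}, \tilde{c}_{23}, \tilde{c}_{24}, \tilde{c}_{25}$ are available. Matching coefficients of $B^2_j$ therefore yields five equations; after substituting the $C^0$, $C^1$, $C^2$ conditions, four of them, in a triangular pattern because each $\tilde{c}_{22+j}$ enters with a nonvanishing prefactor in exactly one equation, deliver the first four $C^3$ formulas in the statement. The fifth equation contains no new unknowns and simplifies to the final constraint relating only the $c_i$ and the geometric parameters $(\beta_1, \beta_2, \beta_3)$; this is precisely the obstruction announced in the introduction, showing that $C^3$ smoothness on a general $\TTT_{12}$ is impossible. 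The heaviest book-keeping is identifying, for each $B^{5-k}_j$, the collection of $Q_i$ that contribute after applying \eqref{eq:MicchelliDifferentiation} and \eqref{eq:knotinsertion}; these calculations are mechanical but lengthy and are most reliably verified with the {\tt Sage} worksheet of~\cite{WebsiteGeorg}.
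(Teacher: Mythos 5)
Your proposal follows essentially the same route as the paper's proof: restrict $D^k_\bfu f$ and $D^k_\bfu \tilde{f}$ to $e$, expand in the univariate bases $B^{5-k}_1,\ldots,B^{5-k}_{8-k}$ via Tables~\ref{tab:Qrestrictions1} and~\ref{tab:Qrestrictions2}, equate coefficients, and solve the resulting triangular systems order by order; the counts ($8,7,6$ equations for $8,7,6$ new unknowns at orders $0,1,2$, then $5$ equations for only $4$ new unknowns at order $3$) correctly explain why the last $C^3$ relation is a pure constraint on the $c_i$. This matches the paper's elimination scheme with $n_1=8$, $n_2=15$, $n_3=21$, $n_4=25$ and its deferral of the bookkeeping to the worksheet.

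The one point that needs repair is your choice of ``direction.'' The triples $(0,0,1)$ and $(\beta_1,\beta_2,\beta_3)$ sum to $1$, not $0$, so they are not the directional coordinates of any vector; they are the barycentric coordinates of the point $\tilde{\bfv}_3$, and plugging them into \eqref{eq:MicchelliDifferentiation} or into the tables does not literally compute a derivative. The paper instead uses the genuine vector $\bfu=\tilde{\bfv}_3-\bfv_1$, with directional coordinates $(-1,0,1)$ with respect to $\tilde{\PS}$ and $(\beta_1-1,\beta_2,\beta_3)$ with respect to $\PS$. Your formal substitution does yield exactly the stated formulas (it is the polar-form version of the conditions): since the entries of the tables are homogeneous of degree $k$ in $(\alpha_1,\alpha_2,\alpha_3)$, evaluating at $(1,0,0)+\bfu$ instead of $\bfu$ adds polarized terms in which some slots carry the point $\bfv_1\in e$, and those terms agree on the two sides once the conditions of order $<k$ are imposed --- which your induction supplies. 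But this equivalence is exactly the step you skip; as written, the assertion that ``the same $\bfu$ has directional coordinates $(\beta_1,\beta_2,\beta_3)$ with respect to $\PS$'' is false. Either switch to $\bfu=\tilde{\bfv}_3-\bfv_1$ as the paper does, or insert a short lemma justifying that, modulo the lower-order conditions, the homogeneous forms in Tables~\ref{tab:Qrestrictions1} and~\ref{tab:Qrestrictions2} may be evaluated at arguments summing to one.
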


\begin{proof}
By the barycentric nature of the statement, we can change coordinates by the linear affine map that sends $\bfv_1\longmapsto (0,0)$, $\bfv_2\longmapsto (1,0)$, and $\bfv_3\longmapsto (0,1)$. In these coordinates,
\[ \tilde{\bfv}_3 = \beta_1(0,0) + \beta_2(1,0) + \beta_3(0,1) = (\beta_2, \beta_3).\]
Let $\bfu := \tilde{\bfv}_3 - \bfv_1 = (\beta_2, \beta_3)$. For $r = 0,1,2,3$, the splines $f$ and $\tilde{f}$ meet with $C^r$ smoothness along $e$ if and only if $D^k_\bfu f(\cdot,0) = D^k_\bfu \tilde{f}(\cdot,0)$ for $k = 0, \ldots, r$. Substituting \eqref{eq:neighboringsimplexsplines}, this is equivalent to
\[ \sum_{i=1}^{39} c_i w_i D^k_\bfu Q_i(\cdot,0) = \sum_{i=1}^{39} \tilde{c}_i w_i D^k_\bfu \tilde{Q}_i(\cdot,0),\qquad k = 0,\ldots, r, \]
which, using Tables \ref{tab:Qrestrictions1} and \ref{tab:Qrestrictions2}, reduces to a sparse system \[ \sum_{j=1}^{8-k} r_{kj} B_j^{5-k} = 0,\qquad k = 0,\ldots, r,\]
where $r_{kj}$ is a linear combination of $c_i$ and $\tilde{c}_i$ with $i = 1,\ldots, n_{k+1}$, where $n_1 = 8, n_2 = 15, n_3 = 21$, and $n_4 = 25$. This system holds identically if and only if $r_{kj} = 0$ for $j = 1,\ldots,8-k$ and $k = 0,\ldots,r$. Let $n_0 = 0$. For $k = 0, 1, 2, 3$, one solves for $\tilde{c}_{n_k + 1}, \ldots, \tilde{c}_{n_{k+1}}$, each time eliminating the B\'ezier ordinates $\tilde{c}_i$ that were previously obtained, resulting in the smoothness relations of the Theorem; see the worksheet for details \cite{WebsiteGeorg}.
\end{proof}

As for the B\'ezier basis and the S-basis from \cite{Cohen.Lyche.Riesenfeld13}, each smoothness relation also holds when replacing each B\'ezier ordinate by the corresponding domain point. The smoothness relations therefore also hold between the corresponding control points.

The final $C^3$ smoothness condition only involves the B\'ezier ordinates in a single triangle and the barycentric coordinates of the opposing vertex in a neighboring triangle. It follows that $C^3$ smoothness using $\SSS^3_5$ cannot be achieved on a general refined triangulation $\TTT_{12}$.
	
\subsubsection*{Conversion to Hermite nodal basis}
Let $\Lambda$ be as in \eqref{eq:Lambda} with $\bfx_{\bfv} := \bfv_i - \bfv$ and $\bfy_{\bfv} := \bfv_j - \bfv$ for any vertex $\bfv \in \VVV$ with opposing edge $[\bfv_i, \bfv_j]\in \EEE$, and $\bfu_e = \bfv_k - \bfm_e$ for any edge $e$ with opposing vertex $\bfv_k$. In addition to the basis $\BBB_c = \{w_i Q_i\}_{i=1}^{39}$, the spline space $\SSS^3_5(\PSB)$ has the (Hermite) nodal basis $\Lambda^* = \{\lambda_i^*\}_{i=1}^{39}$ dual to $\Lambda = \{\lambda_i\}_{i=1}^{39}$, i.e., $\lambda_j(\lambda_i^*) = \delta_{ij}$. In this section we express $\Lambda^*$ in terms of $\BBB_c$. For details we refer to the worksheet~\cite{WebsiteGeorg}.

Write $Q_i = a_{i, 1} \lambda_1^* + \cdots + a_{i,39} \lambda_{39}^*$ for $i = 1, \ldots, 39$, so that $\lambda_j (Q_i) = a_{ij}$. Multiplying by the inverse of the matrix $(a_{ij})_{i,j}$, we can express the nodal basis functions $\lambda_1^*, \ldots \lambda_{39}^*$ in terms of $Q_1, \ldots, Q_{39}$.

\allowdisplaybreaks
\begin{theorem}\label{thm:conversion}
With $\bfv = \bfv_1$, $\bfx = \bfv_2 - \bfv_1$, $\bfy = \bfv_3 - \bfv_1$, and $\bfu = \bfv_3 - \bfv_4$,
\begin{align*}
\varepsilon_{\bfv}^*      = & \frac14 \SimS{600101} +  \frac14 \left(\SimS{500201} + \SimS{500102}\right) + \frac12 \left(\SimS{410201} + \SimS{401102}\right) \\
&  +  \SimS{411101} +  \frac12 \left(\SimS{311201} + \SimS{311102}\right) + \frac12 \left(\SimS{320201} + \SimS{302102}\right) \\
& + \frac{9}{16} \left(\SimS{211211} + \SimS{211112}\right) + \frac38 \left(\SimS{220211} + \SimS{202112}\right)\\
& + \frac{3}{16} \left(\SimS{112112} + \SimS{121211}\right)  \\
(\varepsilon_{\bfv} D_{\bfx})^* = & \frac{1}{40} \SimS{500201} + \frac{1}{10} \SimS{410201} + \frac15 \SimS{320201} + \frac{1}{10} \SimS{411101} \\
&  + \frac{3}{20} \SimS{311201} + \frac{13}{80} \SimS{220211} + \frac{1}{20} \SimS{311102} + \frac{19}{80} \SimS{211211}\\
&  + \frac{1}{10} \SimS{121211} - \frac{1}{40} \SimS{211112} - \frac{1}{40} \SimS{202112} - \frac{1}{20} \SimS{112112} \\
(\varepsilon_{\bfv} D^2_{\bfx})^* = & \frac{1}{160} \SimS{410201} + \frac{1}{32} \SimS{320201} + \frac{1}{80} \SimS{311201} + \frac{17}{640} \SimS{220211}\\
& + \frac{121}{3840} \SimS{211211} + \frac{71}{3840} \SimS{121211} - \frac{1}{48} \SimS{211112} + \frac{1}{480} \SimS{112112} \\
(\varepsilon_{\bfv} D_{\bfx} D_{\bfy})^*  = & \frac{1}{80} \SimS{411101} + \frac{3}{160} \left(\SimS{311201} + \SimS{311102}\right) - \frac{1}{160}\left(\SimS{220211} + \SimS{202112}\right) \\
& + \frac{17}{960} \left(\SimS{211112}  + \SimS{211211}\right) - \frac{7}{480} \left(\SimS{112112} + \SimS{121211}\right)\\
(\varepsilon_{\bfv} D^3_{\bfx})^* = & \frac{1}{480} \SimS{320201} + \frac{7}{3840} \SimS{220211} + \frac{5}{3072} \SimS{211211} + \frac{7}{5120} \SimS{121211}\\
(\varepsilon_{\bfv} D^2_{\bfx} D_{\bfy})^*  = & \frac{1}{480} \SimS{311201} - \frac{1}{1920} \SimS{220211} - \frac{1}{768} \SimS{121211}\\
& + \frac{11}{3840} \SimS{211211} - \frac{11}{3840} \SimS{211112} + \frac{1}{3840} \SimS{112112}\\
(\varepsilon_{\bfq_{1,e}} D^2_{\bfu})^* = & \frac{7}{240} \SimS{211211} - \frac{1}{240} \SimS{121211} \\
(\varepsilon_{\bfm_e} D_{\bfu})^* = & \frac{1}{10} \SimS{220211} + \frac15 \left(\SimS{211211} + \SimS{121211}\right).
\end{align*}
\end{theorem}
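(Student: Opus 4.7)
The plan is to apply standard linear algebra. Since $\Lambda = \{\lambda_1,\ldots,\lambda_{39}\}$ is a basis of the dual of $\SSS^3_5(\PSB)$ by Theorem~\ref{thm:dimS} specialized to a single triangle, the dual nodal basis $\Lambda^* = \{\lambda_1^*,\ldots,\lambda_{39}^*\}$, characterized by $\lambda_j(\lambda_i^*) = \delta_{ij}$, is a basis of the spline space. Since $\BBB_c = \{Q_1,\ldots,Q_{39}\}$ is also a basis, every $Q_i$ has a unique expansion $Q_i = \sum_{j=1}^{39} a_{ij} \lambda_j^*$; applying $\lambda_k$ to both sides gives $a_{ik} = \lambda_k(Q_i)$. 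Inverting the matrix $A = (a_{ij})$ yields $\lambda_i^* = \sum_{j=1}^{39} (A^{-1})_{ij} Q_j$, which is precisely the format displayed in the statement.

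First I would assemble $A$ one column at a time, grouped by the type of functional in $\Lambda$. For the edge-based functionals (second-order derivatives at the quarterpoints and a first-order derivative at the midpoint of each edge), I would combine the differentiation formula~\eqref{eq:MicchelliDifferentiation} with the edge-restriction formula~\eqref{eq:restriction}; for the edge $[\bfv_1,\bfv_2]$ the required linear combinations $D^k_\bfu Q_i|_e$ are already tabulated in Tables~\ref{tab:Qrestrictions1} and~\ref{tab:Qrestrictions2}, and the analogous data for the other two edges follow by relabelling, thanks to the $S_3$-invariance of $\BBB_c$. For the vertex functionals $\varepsilon_\bfv D_\bfx^i D_\bfy^j$ at a corner $\bfv$, I would apply~\eqref{eq:MicchelliDifferentiation} $(i+j)$ times to reduce each $D_\bfx^i D_\bfy^j Q_\ell$ to a linear combination of simplex splines $Q[\bfK']$ of degree $5-i-j$, and then evaluate at $\bfv$ using the explicit Bernstein-type formula for simplex splines with three distinct knots recalled in the examples of Section~\ref{sec:simplexsplines}.

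Once $A$ has been built, symbolic inversion over $\mathbb{Q}$ produces $A^{-1}$, whose rows give the coefficients of each $\lambda_i^*$ in terms of $\BBB_c$. Rather than reporting all $39$ rows, I would exploit two symmetries to reduce the output to the $8$ representatives displayed. First, the $S_3$-invariance of $\BBB_c$ together with the $S_3$-symmetric construction of $\Lambda$ implies that the dual nodal elements at the other two vertices and along the other two edges are obtained from the listed ones by a relabelling of the $\bfv_i$. Second, the reflection swapping $\bfx$ and $\bfy$ at $\bfv$ exchanges $(\varepsilon_\bfv D_\bfx^i D_\bfy^j)^*$ with $(\varepsilon_\bfv D_\bfx^j D_\bfy^i)^*$, so it suffices to record representatives with $i\geq j$.

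The main obstacle is sheer bookkeeping rather than any conceptual difficulty: building a $39\times 39$ rational matrix and inverting it by hand is infeasible, so the actual computation would be carried out in the Sage worksheet accompanying the paper~\cite{WebsiteGeorg}. For an independent sanity check I would substitute the resulting expansions back into the identities $\lambda_j(\lambda_i^*) = \delta_{ij}$ for several chosen pairs, and I would also verify that evaluating each $\lambda_i^*$ on a degree-$5$ monomial recovers the expected Hermite datum consistent with the barycentric Marsden identity~\eqref{eq:BarycentricMarsdenIdentity}.
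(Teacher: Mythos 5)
Your proposal is correct and follows essentially the same route as the paper: form the matrix $a_{ij}=\lambda_j(Q_i)$ (whose entries come from the differentiation, knot-insertion and restriction formulas, as tabulated in Tables~\ref{tab:Qrestrictions1} and~\ref{tab:Qrestrictions2} for the edge functionals), invert it symbolically in the accompanying worksheet, and read off the rows of the inverse, using the $S_3$-equivariance of $\Lambda$ and $\BBB_c$ to reduce to the displayed representatives. The paper gives exactly this argument, deferring the numerical work to the Sage worksheet, so no further comparison is needed.
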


Note that the coefficients in these linear combinations are independent of the geometry of the triangle. The remaining nodal functions in $\Lambda^*$ are obtained by applying a symmetry in $S_3$ to the above equations. For instance, 
\[ (\varepsilon_{\bfq_{2,e}} D^2_{\bfu})^* = \frac{7}{240} \SimS{121211} - \frac{1}{240} \SimS{211211}. \]

\begin{example}\label{ex:NumericalExample}
Let $\bfv_i := \big(\cos(2\pi i/6), \sin(2\pi i/6) \big)$, with $i = 1,2, \ldots, 6$, be the vertices of a regular hexagon centred at the origin $\bfv_0 := (0, 0)$. Consider the triangulation $\TTT$ with triangles 
$[\bfv_0, \bfv_6, \bfv_1], [\bfv_0, \bfv_1, \bfv_2], \ldots, [\bfv_0, \bfv_5, \bfv_6]$. The nodal functions $\varepsilon_{\bfv_0}^*$ on these triangles patch together to a spline in $\SSS^{2,3}_5(\TTT_{12})$, which is plotted in Figure~\ref{fig:NodalFunction} together with its control mesh.
\end{example}

\begin{figure}
\begin{center}
\includegraphics[scale = 0.11]{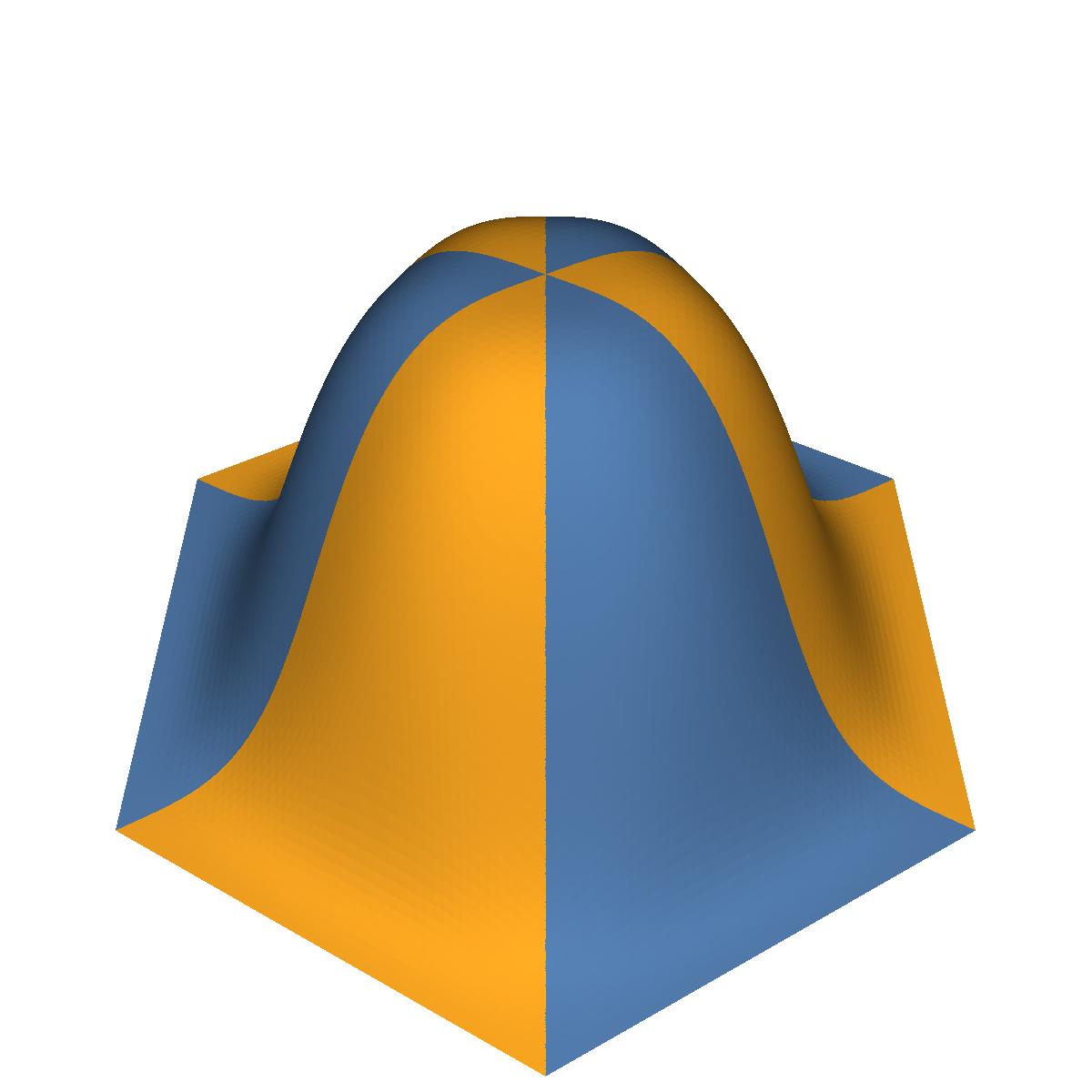}
\includegraphics[scale = 0.11]{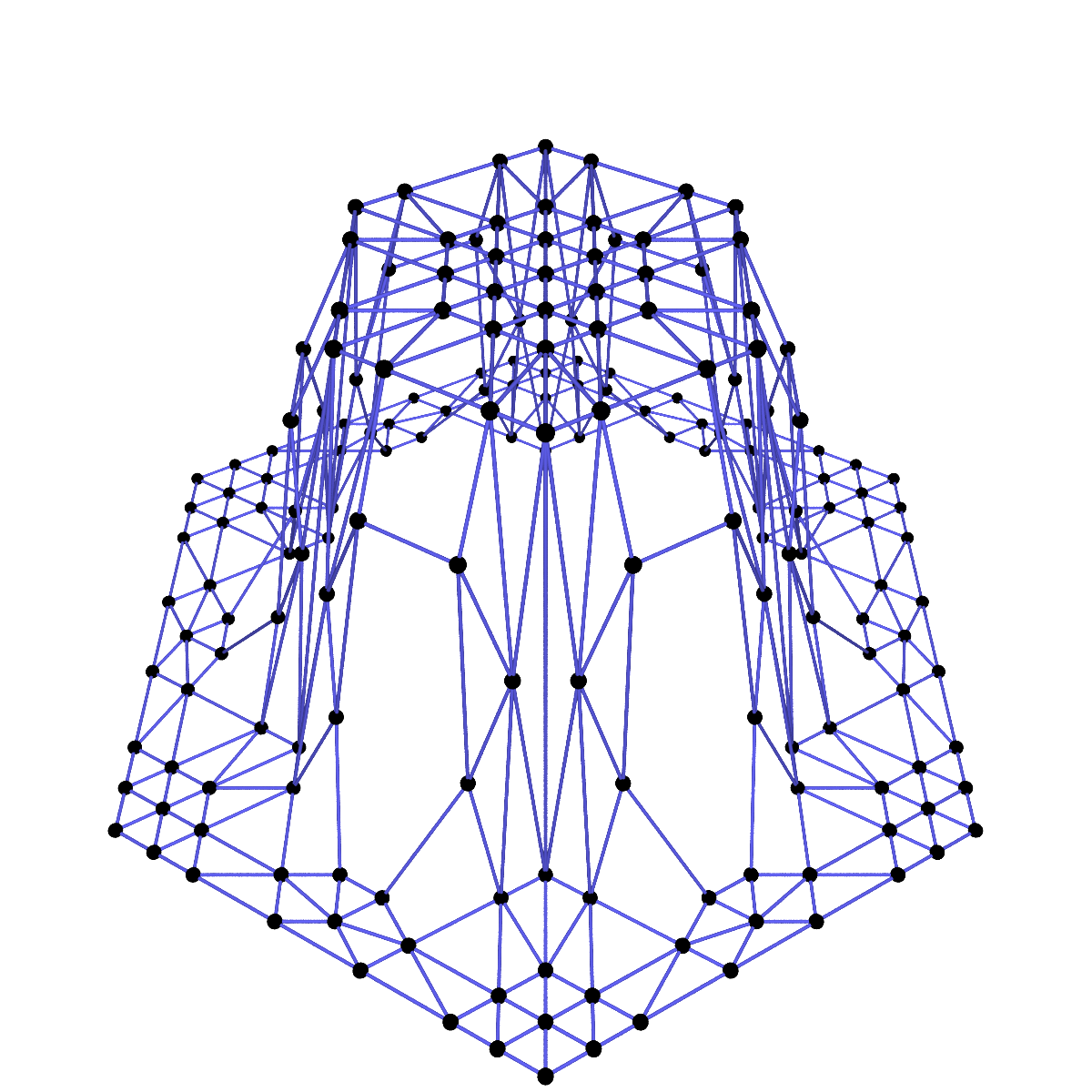}
\end{center}
\caption{The nodal function $\varepsilon_{\bfv_0}^*$ (left) and its control mesh (right) on the triangulation from Example \ref{ex:NumericalExample}.}\label{fig:NodalFunction}
\end{figure}

\section{Final remarks}

\begin{remark}
On the Powell-Sabin 6-split a simplex spline basis is not possible, because it is not a complete graph. However,  there exist $C^2$ quintic B-spline bases \cite{Alfeld.Schumaker02, Speleers10, Speleers13}.
\end{remark}

\begin{remark}
The $C^1$ quadratics from \cite{Powell.Sabin77} and the space from \cite{Lyche.Muntingh14} can be viewed as the cases $n=1, 2$ of a sequence of locally $C^{2n-1}$, globally $C^n$ spaces of degree $3n-1$. There is a natural generalization to general $n$ of a set of nodal functionals that on a single triangle has size equal to the dimension $\frac{15}{2}n^2 + \frac92 n$ of this space. As remarked in \cite{Lyche.Muntingh14}, these do not form a basis for $n > 2$. However, it is plausible that one can instead construct simplex spline bases for higher degree and smoothness.
\end{remark}

\begin{remark}
A case-by-case analysis, and a computation in the worksheet, show that the $C^1$ quadratic simplex splines on the 12-split are
\[ \left[
\SimS{300101},\
\SimS{210101},\
\SimS{110111},\
\SimS{301100},\
\SimS{211100},\
\SimS{111011},\
\SimS{221000},\
\SimS{311000}
\right]_{S_3}. \]
The quadratic S-basis of $\SSS^1_2$ comprises the first three types and was shown to have local linear independence. Taking any other combination will overload some of the triangles in the 12-split, and the S-basis is therefore the unique simplex spline basis with local linear independence. Moreover, it reduces to a B-spline basis on the boundary.

For the quintic splines, a case-by-case analysis shows that each of the outer faces $\PS_1, \PS_2, \ldots, \PS_6$ will be covered by at least 9 of the 21 simplex splines that are nonzero on the boundary, and by at least 14 of the 18 remaining simplex splines. Thus each outer face is covered by at least $23 > (5+1)(5+2)/2$ simplex splines, showing that $\SSS^3_5$ admits no locally linearly independent $S_3$-invariant simplex spline basis that reduces to a B-spline basis on the boundary.
\end{remark}

\begin{bibdiv}
\begin{biblist}

\bib{Alfeld.Schumaker02}{article}{
   author={Alfeld, Peter},
   author={Schumaker, Larry L.},
   title={Smooth macro-elements based on Powell-Sabin triangle splits},
   journal={Adv. Comput. Math.},
   volume={16},
   date={2002},
   number={1},
   pages={29--46},
   issn={1019-7168},
   %review={\MR{1888218 (2003a:65097)}},
   %doi={10.1023/A:1014299228104},
}

\bib{Chui.Wang83}{article}{
   author={Chui, Charles K.},
   author={Wang, Ren Hong},
   title={Multivariate spline spaces},
   journal={J. Math. Anal. Appl.},
   volume={94},
   date={1983},
   number={1},
   pages={197--221},
   issn={0022-247X},
   %review={\MR{701458 (84f:41010)}},
   %doi={10.1016/0022-247X(83)90014-8},
}

\bib{Ciarlet.78}{book}{
   author={Ciarlet, Philippe G.},
   title={The finite element method for elliptic problems},
   series={Classics in Applied Mathematics},
   volume={40},
   note={Reprint of the 1978 original [North-Holland, Amsterdam]},
   publisher={Society for Industrial and Applied Mathematics (SIAM),
   Philadelphia, PA},
   date={2002},
   pages={xxviii+530},
   isbn={0-89871-514-8},
   %review={\MR{1930132}},
   %doi={10.1137/1.9780898719208},
}

\bib{Cohen.Riensenfeld.Elber01}{book}{
   author={Cohen, Elaine},
   author={Riesenfeld, Richard F.},
   author={Elber, Gershon},
   title={Geometric modeling with splines},
   note={An introduction;
   With a foreword by Tom Lyche},
   publisher={A K Peters, Ltd., Natick, MA},
   date={2001},
   pages={xxii+616},
   %isbn={1-56881-137-3},
   %review={\MR{1845683 (2002j:65024)}},
}

\bib{Cohen.Lyche.Riesenfeld13}{article}{
   author={Cohen, Elaine},
   author={Lyche, Tom},
   author={Riesenfeld, Richard F.},
   title={A B-spline-like basis for the Powell-Sabin 12-split based on
   simplex splines},
   journal={Math. Comp.},
   volume={82},
   date={2013},
   number={283},
   pages={1667--1707},
   issn={0025-5718},
   %review={\MR{3042581}},
   %doi={10.1090/S0025-5718-2013-02664-6},
}

\bib{Davydov.Yeo13}{article}{
   author={Davydov, Oleg},
   author={Yeo, Wee Ping},
   title={Refinable $C^2$ piecewise quintic polynomials on
   Powell-Sabin-12 triangulations},
   journal={J. Comput. Appl. Math.},
   volume={240},
   date={2013},
   pages={62--73},
   issn={0377-0427},
 %  review={\MR{2991108}},
 %  doi={10.1016/j.cam.2012.08.006},
}

\bib{DynLyche98}{article}{
   author={Dyn, Nira},
   author={Lyche, Tom},
   title={A Hermite subdivision scheme for the evaluation of the
   Powell-Sabin $12$-split element},
   conference={
      title={Approximation theory IX, Vol. 2},
      address={Nashville, TN},
      date={1998},
   },
   book={
      editor={Charles K. Chui},
      editor={Larry L. Schumaker},
      series={Innov. Appl. Math.},
      publisher={Vanderbilt Univ. Press},
      place={Nashville, TN},
   },
   date={1998},
   pages={33--38},
   %review={\MR{1743030}},
}

\bib{Goodman.Lee81}{article}{
   author={Goodman, T. N. T.},
   author={Lee, S. L.},
   title={Spline approximation operators of Bernstein-Schoenberg type in one
   and two variables},
   journal={J. Approx. Theory},
   volume={33},
   date={1981},
   number={3},
   pages={248--263},
   issn={0021-9045},
   %review={\MR{647851 (83f:41008)}},
   %doi={10.1016/0021-9045(81)90074-5},
}

\bib{Hollig82}{article}{
   author={H{\"o}llig, Klaus},
   title={Multivariate splines},
   journal={SIAM J. Numer. Anal.},
   volume={19},
   date={1982},
   number={5},
   pages={1013--1031},
   issn={0036-1429},
   %review={\MR{672574 (84i:41013)}},
   %doi={10.1137/0719073},
}

\bib{Hughesbook}{book}{
   author={Cottrell, J. Austin},
   author={Hughes, Thomas J.R.},
   author={Bazilevs, Yuri},
   title={Isogeometric analysis: toward integration of CAD and FEA},
   %series={},
   %volume={},
   publisher={Wiley Publishing},
   %place={},
   date={August 2009},
   pages={360},
}

\bib{Knuth94}{article}{
   author={Knuth, Donald},
   title={Bracket notation for the ``coefficient of'' operator},
   eprint={http://arxiv.org/abs/math/9402216},
   book={
      editor = {Roscoe, A. W.},
      title = {A Classical Mind: Essays in Honour of C. A. R. Hoare},
      year = {1994},
      isbn = {0-13-294844-3},
      address = {Hertfordshire, UK},
      publisher = {Prentice Hall International (UK) Ltd.},
   },
}

\bib{Lai.Schumaker03}{article}{
   author={Lai, Ming-Jun},
   author={Schumaker, Larry L.},
   title={Macro-elements and stable local bases for splines on Powell-Sabin
   triangulations},
   journal={Math. Comp.},
   volume={72},
   date={2003},
   number={241},
   pages={335--354},
   issn={0025-5718},
   %review={\MR{1933824 (2003i:65012)}},
   %doi={10.1090/S0025-5718-01-01379-5},
}

\bib{Lai.Schumaker07}{book}{
   author={Lai, Ming-Jun},
   author={Schumaker, Larry L.},
   title={Spline functions on triangulations},
   series={Encyclopedia of Mathematics and its Applications},
   volume={110},
   publisher={Cambridge University Press},
   place={Cambridge},
   date={2007},
   pages={xvi+592},
   isbn={978-0-521-87592-9},
   isbn={0-521-87592-7},
   %review={\MR{2355272 (2008i:41001)}},
   %doi={10.1017/CBO9780511721588},
}

\bib{Lyche.Muntingh14}{article}{
   author={Lyche, Tom},
   author={Muntingh, Georg},
   title={A Hermite interpolatory subdivision scheme for $C^2$-quintics
   on the Powell-Sabin 12-split},
   journal={Comput. Aided Geom. Design},
   volume={31},
   date={2014},
   number={7-8},
   pages={464--474},
   issn={0167-8396},
   %review={\MR{3268221}},
   %doi={10.1016/j.cagd.2014.03.004},
}

\bib{Micchelli79}{article}{
   author={Micchelli, Charles A.},
   title={On a numerically efficient method for computing multivariate
   $B$-splines},
   conference={
      title={Multivariate approximation theory},
      address={Proc. Conf., Math. Res. Inst., Oberwolfach},
      date={1979},
   },
   book={
	  editor = {Walter Schempp},
	  editor = {Karl Zeller},	  
      series={Internat. Ser. Numer. Math.},
      volume={51},
      publisher={Birkh\"auser, Basel-Boston, Mass.},
   },
   date={1979},
   pages={211--248},
   %review={\MR{560673 (81g:65017)}},
}

\bib{WebsiteGeorg}{article}{
   author={Muntingh, Georg},
   title={Personal Website},
   eprint={https://sites.google.com/site/georgmuntingh/academics/software}
}

\bib{Oswald92}{article}{
   author={Oswald, Peter},
   title={Hierarchical conforming finite element methods for the biharmonic
   equation},
   journal={SIAM J. Numer. Anal.},
   volume={29},
   date={1992},
   number={6},
   pages={1610--1625},
   issn={0036-1429},
   %review={\MR{1191139 (93k:65098)}},
   %doi={10.1137/0729093},
}

\bib{Powell.Sabin77}{article}{
   author={Powell, Michael J. D.},
   author={Sabin, Malcolm A.},
   title={Piecewise quadratic approximations on triangles},
   journal={ACM Trans. Math. Software},
   volume={3},
   date={1977},
   number={4},
   pages={316--325},
   issn={0098-3500},
   %review={\MR{0483304 (58 \#3319)}},
}

\bib{Prautsch.Boehm.Paluszny02}{book}{
   author={Prautzsch, Hartmut},
   author={Boehm, Wolfgang},
   author={Paluszny, Marco},
   title={B\'ezier and B-spline techniques},
   series={Mathematics and Visualization},
   publisher={Springer-Verlag, Berlin},
   date={2002},
   pages={xiv+304},
   isbn={3-540-43761-4},
   %review={\MR{1993341 (2004e:65017)}},
   %doi={10.1007/978-3-662-04919-8},
}

\bib{Sage}{book}{
   author={Stein, William A.},
   author={others},
   organization = {The Sage Development Team},
   title = {{S}age {M}athematics {S}oftware ({V}ersion 6.5)},
   eprint = {{\tt http://www.sagemath.org}},
   date = {2015},
}

\bib{Schenck.Stillman97}{article}{
   author={Schenck, Hal},
   author={Stillman, Mike},
   title={A family of ideals of minimal regularity and the Hilbert series of
   $C^r(\hat \Delta)$},
   journal={Adv. in Appl. Math.},
   volume={19},
   date={1997},
   number={2},
   pages={169--182},
   issn={0196-8858},
   %review={\MR{1459496 (98i:13030)}},
   %doi={10.1006/aama.1997.0533},
}

\bib{Schumaker.Sorokina06}{article}{
   author={Schumaker, Larry L.},
   author={Sorokina, Tatyana},
   title={Smooth macro-elements on Powell-Sabin-12 splits},
   journal={Math. Comp.},
   volume={75},
   date={2006},
   number={254},
   pages={711--726 (electronic)},
   issn={0025-5718},
   %review={\MR{2196988 (2006m:65284)}},
   %doi={10.1090/S0025-5718-05-01813-2},
}

\bib{Seidel92}{article}{
   author={Seidel, Hans-Peter},
   title={Polar forms and triangular $B$-spline surfaces},
   conference={
      title={Computing in Euclidean geometry},
   },
   book={
      series={Lecture Notes Ser. Comput.},
      volume={1},
      publisher={World Sci. Publ., River Edge, NJ},
   },
   date={1992},
   pages={235--286},
   %review={\MR{1239195}},
}

\bib{Speleers10}{article}{
   author={Speleers, Hendrik},
   title={A normalized basis for quintic Powell-Sabin splines},
   journal={Comput. Aided Geom. Design},
   volume={27},
   date={2010},
   number={6},
   pages={438--457},
   issn={0167-8396},
   %review={\MR{2657545 (2011j:65034)}},
   %doi={10.1016/j.cagd.2010.05.001},
}

\bib{Speleers13}{article}{
   author={Speleers, Hendrik},
   title={Construction of normalized B-splines for a family of smooth spline
   spaces over Powell-Sabin triangulations},
   journal={Constr. Approx.},
   volume={37},
   date={2013},
   number={1},
   pages={41--72},
   issn={0176-4276},
   %review={\MR{3010210}},
   %doi={10.1007/s00365-011-9151-x},
}

\bib{Zenisek.74}{article}{
   author={{\v{Z}}en{\'{\i}}{\v{s}}ek, Alexander},
   title={A general theorem on triangular finite $C^{(m)}$-elements},
   language={English, with Loose French summary},
   journal={Rev. Fran\c caise Automat. Informat. Recherche Op\'erationnelle
   S\'er. Rouge},
   volume={8},
   date={1974},
   number={R-2},
   pages={119--127},
   %review={\MR{0388731 (52 \#9565)}},
}

\end{biblist}
\end{bibdiv}

\end{document}